 \newtheorem{theorem}{Theorem}[section]
 \newtheorem{corollary}[theorem]{Corollary}
 \newtheorem{lemma}[theorem]{Lemma}
\newtheorem{observation}[theorem]{Observation}
\theoremstyle{definition}
\theoremstyle{remark}
\newtheorem{fact*}{Fact}
\newcommand\dd{\mathrm d}
\newcommand{\hilbert}{\mathcal{H}}
\renewcommand\h{\mathcal{H}}
\newcommand{\BH}{\mathcal{B}(\mathcal{H})}
\newcommand{\abs}[1]{\left\vert#1\right\vert}
\newcommand{\inv}{^{-1}}
\newcommand{\til}{\raise.17ex\hbox{$\scriptstyle\mathtt{\sim}$}}
\newcommand\beq{\begin{equation}}
\newcommand\eeq{\end{equation}}
\newcommand\bbm{\begin{bmatrix}}
\newcommand\ebm{\end{bmatrix}}
\newcommand{\bbms}{\left[ \begin{smallmatrix}}
\newcommand{\ebms}{\end{smallmatrix} \right]}
\newcommand{\bpm}{\begin{pmatrix}}
\newcommand{\epm}{\end{pmatrix} }
\numberwithin{equation}{section}
\newlength{\Mheight}
\newlength{\cwidth}
\newcommand{\dfn}[1]{{\bf #1}\index{#1}}
\newcommand{\MU}[1]{\mathcal{M}(#1)}
\newcommand{\vex}[1]{\overrightarrow{#1}}
\newcommand{\BK}{\mathcal{B}(\mathcal{K})}
\newcommand{\KK}{\mathcal{K}}
\title[Free universal monodromy and plurisubharmonicity]{Noncommutative free universal monodromy, pluriharmonic conjugates, and plurisubharmonicity  }
\author[J. E. Pascoe]{
J. E. Pascoe
}
\address{Department of Mathematics\\
1400 Stadium Rd\\
  University of Florida\\
 Gainesville, FL 32611}
\email[J. E. Pascoe]{pascoej@ufl.edu}
\date{\today}
\subjclass[2010]{47A56, 46L07, 32A99, 	46L89, 34M35}
\begin{document}

\begin{abstract}
We show that the monodromy theorem holds on arbitrary connected free sets for noncommutative free analytic functions. Applications are numerous--
pluriharmonic free functions have globally defined pluriharmonic conjugates, locally invertible functions are globally invertible, and there is no nontrivial cohomology theory arising from analytic continuation on
connected free sets.
We describe why the Baker-Campbell-Hausdorff formula has finite radius of convergence in terms of monodromy, and solve a related problem of Martin-Shamovich.
We generalize the Dym-Helton-Klep-McCullough-Volcic theorem-- a uniformly real analytic free noncommutative function 
is plurisubharmonic if and only if it can be written as a composition of a convex function with an analytic function. The decomposition is essentially unique. The result is first established locally, and then Free Universal Monodromy implies the global result. Moreover, we see that plurisubharmonicity is a geometric property-- a real analytic free function plurisubharmonic on a neighborhood is plurisubharmonic on the whole domain. We give an analytic Greene-Liouville theorem, an entire free plurisubharmonic function  is a sum of hereditary and antihereditary squares.
\end{abstract}

\maketitle

\section{Introduction}
	Let $U\subseteq \mathbb{C}$ be open. A function $u: U \rightarrow \mathbb{R}\cup\{-\infty\}$ is said to be \dfn{subharmonic} if  $\Delta u \geq 0.$ (Here, if $u$ is not $C^2,$ one could have taken the Laplacian of $u$ in the sense of distributions.)
	In the early twentieth century, Riesz showed \cite{rie23, riesz1926, riesz1930} that a subharmonic function which is bounded above by some harmonic function is of the form
		\beq \label{rieszrep} u(z)= v(z)+\int_{\mathbb{C}} \log \abs{z-w} \dd\mu(w)\eeq
	for some harmonic function $v$ and positive Borel measure $\mu$ on $\mathbb{C}.$

	Let $U \subseteq \mathbb{C}^n$ be open.
	A function $u: U \rightarrow \mathbb{R}\cup\{-\infty\}$ is said to be \dfn{plurisubharmonic} if the complex Hessian $[\frac{\partial^2}{\partial \overline{z_i} \partial z_j} u]_{i,j}\geq 0.$ (Given a square matrix
	$M$ we say $M\geq 0$ if $M$ is positive semidefinite.)
	Plurisubharmonic functions are locally the composition of a convex function with and analytic function, and thus are foundational to the notion in several complex variables of pseudoconvexity \cite{oka42, dangelo}.

	The \dfn{monodromy theorem} in complex variables says that if a function analytically continues along every path in a simply connected domain, then it analytically continues to the whole domain.

	We treat monodromy, pluriharmonic functions and plurisubharmonic functions in the case of free noncommutative function theory. Free functions are modeled on
	\dfn{noncommutative polynomials}, expressions in terms of some noncommuting indeterminates. For example, both
		$$7x_1x_2-x_2x_1, 3x_1^{82}x_2x_1-x_2$$
	are free polynomials in two variables. If we allow inversions, we get the \dfn{free rational expressions}. For example,
		$$(1-3x_1^{-82}x_2x_1)^{-1}-x_2$$
	is a noncommutative rational expression.
	Continuing this line of generalization, one could consider noncommutative power series $$\sum c_\alpha x^\alpha$$ where $\alpha$ ranges over all words in some noncommuting letters. For example,
	the Baker-Campbell-Hausdorff formula is such an expression of classical importance in Lie theory \cite{CBHobstructions}.	
	Such expressions can naturally be evaluated over matrices where they converge. For most of our discussion, the reader could imagine that free noncommutative function is something that is given locally by a noncommutative power series without much loss of fidelity.

	\subsection{Free noncommutative functions}
	Let $R$ be a topological vector space over $\mathbb{C}.$		
	We define the matrix universe over $R$ to be
		$$\MU{R}= \bigcup^{\infty}_{n=1}M_n(\mathbb{C})\otimes R.$$
	Whenever $R \cong \mathbb{C}^d$ is finite dimensional, $\MU{R}$ essentially consists of $d$-tuples of matrices.
	
	A \dfn{free set} $D\subseteq \MU{R}$ satisfies the following:
	\begin{enumerate}
		\item $X, Y \in D \Rightarrow X \oplus Y \in D,$ 
		\item If $X \in D$ and $V$ is a unitary, then $V^*XV \in D.$
	\end{enumerate}
	Given $D$ a free set, we let $D_n$ denote $D \cap M_n(\mathbb{C})\otimes R.$
	We say a free set is \dfn{open} if each $D_n$ is open. We say a free set is \dfn{connected} if each $D_n$ is path connected.	
	When $R$ is a subspace of an operator algebra, we say $D$ is \dfn{uniformly open} if for every $X\in D_m$ there is an $\epsilon > 0$ such that
	$B(\oplus^n X,\epsilon) \subseteq D_{mn}$ for all $n.$

	Let $D \subseteq \MU{R_1}.$
	We define a \dfn{real free function} $f: D \rightarrow \MU{R_2}$ to be a function satisfying the following,
	\begin{enumerate}
		\item $f(D_n) \subseteq \MU{R_2}_n$
		\item $f(X \oplus Y) =f(X)\oplus f(Y),$ 
		\item If $V$ is unitary, then $f(V^*XV) =V^*f(X)V.$ 
	\end{enumerate}
	Given a free function $f$, we write $f_n$ for $f|_{D_n}.$
	We say $f$ is \dfn{analytic} if each $f_n$ is analytic.
	We say $f$ is \dfn{uniformly analytic} if $f$ is analytic and for every $X$ there is a uniformly open free set $D_X \subseteq D$ such that $X\in D_X$ and $f|_{D_X}$ is bounded.
	We similarly define \dfn{real analytic} and \dfn{uniformly real analytic.} Importantly, if $R_1 \cong \mathbb{C}^d$ is finite dimensional, analytic free functions have a noncommutative power series
	in variables $z_1, \ldots z_d$,
	and real analytic free functions have a power series in terms of the letters $z_1, \ldots, z_d$ and their adjoints $z_1^*, \ldots z_d^*.$ An alternative characterization of analytic free functions is that
	they are locally bounded and preserve arbitrary similarites, that is $f(S^{-1}XS) = S^{-1}f(X)S$ where both sides of the formula are defined \cite{KlemTerHorst,PTDPick,helkm11, vvw12}.

	For a real analytic free function $f,$ we denote the \dfn{derivative at $Z$ in a direction $H$} by $$Df(Z)[H] = \frac{\partial }{\partial z}f(Z+zH)\big|_{z=0}.$$
	Here, $\frac{\partial }{\partial z}$ is taken in the sense of complex variables. Given that $\frac{\partial }{\partial z} z = 1, \frac{\partial }{\partial z} \overline{z} =0$
	and $\frac{\partial }{\partial z}(fg) = \left(\frac{\partial }{\partial z}f\right)g + f\left(\frac{\partial }{\partial z}g\right),$ we see that
		$$DZ_i[H] = H_i, DZ_i^*[H] = 0,$$ $$D(fg)(Z)[H] = Df(Z)[H]g(Z)+f(Z)Dg(Z)[H].$$
	Moreover, this allows us to take the derivative of a noncommutative polynomial or power series with relative ease.
	For example, let 
		\beq \label{derivexam} f(Z) = Z_1 +Z_1^*+Z_2Z_2^*Z_1+ Z_2^*Z_2Z_1^*.\eeq
	We have that 
		$$Df(Z)[H]= H_1+ H_2Z_2^*Z_1 + Z_2Z_2^*H_1+Z_2^*H_2Z_1^*.$$
	Similarly, we denote the \dfn{conjugate derivative at $Z$ in a direction $H$} by $$D^*f(Z)[H] = \frac{\partial}{\partial \overline{z}}f(Z+zH)\big|_{z=0}.$$
	We have the corresponding relations for the conjugate derivative,
		$$D^*Z_i[H] = 0, D^*Z_i^*[H] = H_i^*,$$ $$D^*(fg)(Z)[H] = D^*f(Z)[H]g(Z)+f(Z)D^*g(Z)[H].$$
	Notably, for any analytic function $f,$ $D^*f \equiv 0,$ and $D^*f^*(Z)[H] = (Df(Z)[H])^*.$
	For the function $f$ in \eqref{derivexam}, we have that
		$$D^*f(Z)[H]= H_1^*+ H_2^*Z_2Z_1^* + Z_2^*Z_2H_1^*+Z_2H_2^*Z_1.$$
	We denote the \dfn{complex Hessian at $Z$ in the direction $H$} of a real analytic free function $u$ by $$\Delta u (Z)[H]=(\Delta_z u(Z+zH))|_{z=0}
	=\left(\frac{\partial}{\partial \overline{z}}\frac{\partial }{\partial z}  u(Z+zH)\right)\bigg|_{z=0}$$ 
	where $\Delta_z$ is the classical Laplacian with respect to $z.$
	Essentially, $$\Delta f(Z)[H] = D^*Df(Z)[H][H] =DD^*f(Z)[H][H].$$
	For the function $f$ in \eqref{derivexam}, we have that
		$$\Delta f(Z)[H]= H_2H_2^*Z_1 + Z_2H_2^*H_1+H_2^*H_2Z_1^*+Z_2^*H_2H_1^*.$$
	Suppose $f: D \rightarrow \MU{R_2}$ where $R_2$ is a subspace of an operator algebra.	
	We say a self-adjoint-valued real analytic free function is \dfn{plurisubharmonic} if  $\Delta f (Z)[H] \geq 0$ for all $X$ in the domain and for all directions $H.$

	For example, the function $f(Z)= Z^*Z$ is plurisubharmonic as $\Delta f(Z) = H^*H.$ However, consider the function
		$$g(z)= \log |Z| = \frac{1}{2}\log(Z^*Z).$$
	Consider
	\begin{align*}
		\Delta g\bpm 1 & 0 \\ 0 & 1 \epm \bbm 0 & 1 \\ 0 & 0 \ebm&= \Delta_z \log \left|\bpm 1 & z \\ 0 & 1 \epm\right|\bigg|_{z=0}\\
		&= \Delta_z \frac{1}{2}\log \bpm 1 & z \\ \overline{z} & 1+|z|^2\epm\bigg|_{z=0}\\
		&= \Delta_z \frac{1}{2}\sum \frac{(-1)^{n+1}}{n} \bpm 0 & z \\ \overline{z} & |z|^2\epm^n\bigg|_{z=0}\\
		&= \bpm -\frac{1}{4} & 0 \\ 0 & \frac{1}{4} \epm.
	\end{align*}
	Therefore, $g$ is not plurisubharmonic in contrast to the classical case as in \eqref{rieszrep}, and theory of Brown measure, which relies crucially on the fact that 
	$\tau(\log |A-z|)$ is subharmonic for as a function of $z\in\mathbb{C}$ whenever $A$ lies in a tracial von Neumann algebra with trace $\tau$ \cite{brownmeasure, Mingo2017}.

	We denote the \dfn{real derivative at $X$ in the direction $H$} by $D_\mathbb{R}(X)[H] = \frac{d}{dt}f(X+tH)|_{t=0}.$ (Here, we are taking the derivative
	with respect to $t$ as a real variable.)
	We have the corresponding relations for the real derivative,
		$$D_\mathbb{R}Z_i[H] = H_i, D_\mathbb{R}Z_i^*[H] = H_i^*,$$ $$D_\mathbb{R}(fg)(Z)[H] = D_\mathbb{R}f(Z)[H]g(Z)+f(Z)D_\mathbb{R}g(Z)[H].$$
	We denote the \dfn{real second derivative at $X$ in the direction $H$} by $D_\mathbb{R}^2(X)[H] = \frac{d^2}{dt^2}f(X+tH)|_{t=0}.$
	We say $f$ is \dfn{convex} if $D_\mathbb{R}^2(X)[H]\geq 0$ for all $X$ in the domain and for all directions $H.$

	Plurisubharmonic free functions have been analyzed in the case of polynomials and rational functions. Greene, Helton and Vinnikov \cite{GHV}, followed by Greene \cite{greene}
	showed that for noncommutative polynomials, all plurisubharmonic functions are hereditary plus anti-hereditary, that is there are analytic polynomials $h_1, \ldots, h_k$ and $g_1, \ldots g_l$
	such that $$p = \sum h_ih_i^* + \sum g_i^*g_i.$$ Moreover, being locally plurisubharmonic was enough to conclude global plurisubharmonicity.
	Dym, Klep, Helton, McCullough and Volcic, showed that noncommutative rational plurisubharmonic functions can be written as the composition of a convex function with an analytic one \cite{DHKMV}.
	Convex free functions are of the form
		\beq \label{butterflyrealization}
			f(X) = a_0 + L(X) + \Lambda(X)^* (I -  \Gamma(X))\inv \Lambda(X)
		\eeq
	where $\h$ is a Hilbert space, $L:R_1\rightarrow \BK$, $\Lambda:R_1 \rightarrow \mathcal{B}(\KK,\h)$ and $\Gamma: R_1 \rightarrow \BH$ are completely bounded $\mathbb{R}$-linear maps,
	where $L$ and $\Gamma$ are self-adjoint valued,
	as was established in the rational case by Helton, McCullough and Vinnikov \cite{heltonbutterfly} and in the analytic case by Pascoe and Tully-Doyle \cite{royalroad}.
	By formally taking the second derivative of a function with such a representation, one sees that it must be convex wherever $I-\Gamma(Z)$ is positive definite and all the elements are defined. 
	That is, letting $f$ be as in \eqref{butterflyrealization}, $R(X)=(I -  \Gamma(X))\inv,$ and $v(X)=\Gamma(H)R(X)\Lambda(X)+\Lambda(H),$ we have that
		$$D^2_\mathbb{R} f(X)[H] = 2v(X,H)^*R(X)v(X,H).$$
	In the globally defined case, including noncommutative polynomials,
	it is known that $\Gamma \equiv 0$ and therefore that all such functions are quadratic noncommutative polynomials \cite{helmconvex04,dhmconvex,hhlm2008,hptdvconvex}.
	Other realizations for convex functions on certain domains have been established in \cite{palfia}.
	Convex free functions are related to the burgeoning study of matrix convex sets \cite{KennyD, freelasserre, effwink} and their change of variables theory \cite{ahkm,helkm11, hkms09}. A clear motivation of \cite{DHKMV}
	was to develop a corresponding theory of pseudoconvex sets,
		with an eye towards repeating previous success for convexity in systems theory and engineering \cite{HKMBook, CHSY, DeOliveira2009, SIG}.
		Namely, one wants a general theory for when a nonconvex problem is equivalent to a convex problem, hence the motivation for pseudoconvexity and 
		plurisubharmonicity.
		We also note that various realizations for various classes of functions permeate noncommutative free function theory,
	see \cite{royalroad, bgm06, bgm05, bmv18, agmc_gh, palfia, pptd19, pastdcauchy}.
	
	We prove the following representation formula for plurisubharmonic free functions.
	\begin{theorem}\label{introrealization}
		Let $R_1$ be a finite dimensional vector space.
		Let $R_2$ be a subspace of operators.
		Let $D \subseteq \MU{R_1}$ be a connected uniformly open set  containing $0.$
		Let $f:D\rightarrow \MU{R_2}$ be a uniformly real analytic free function on  which is plurisubharmonic on a neighborhood of $0$.
		Then, the function $f$ is plurisubharmonic on $D,$ and
		there are operator-valued free analytic functions $g, v^+, T$ and coanalytic $v^-$ which are defined on the whole domain $D$, such that the following are true.
 		\begin{enumerate}
			\item $T$ is contractive on $D,$
			\item $T, v^+, v^-$ vanish at $0.$
			\item $$ f(Z) = \textrm{Re }g(Z) + \bbm v^+(Z) \\ v^-(Z)\ebm^* \bbm 1 & -T(Z) \\ -T(Z)^* & 1\ebm^{-1}\bbm v^+(Z) \\ v^-(Z)\ebm,$$
			\item If 
				\beq \label{introreal} f = \textrm{Re }\hat{g}(Z) + \bbm \hat{v}^+(Z) \\ \hat{v}^-(Z)\ebm^* \bbm 1 & -\hat{T}(Z) \\ -\hat{T}(Z)^* & 1\ebm^{-1}\bbm \hat{v}^+(Z) \\ \hat{v}^-(Z)\ebm,\eeq
			where $\hat{g}, \hat{v}^+, \hat{T}$ are analytic, $v^{-}$ coanalytic such that $\hat{T},\hat{v}^+,\hat{v}^-$ vanish at $0$ then $\hat{g} = g+iC$ for some constant $C,$ and up to a change of coordinates,
			$\hat{v}^+= v^+ \oplus 0, \hat{v}^-= v^- \oplus 0, \hat{T}= T \oplus J$ for some function $J.$ That is, the representation is essentially unique up to junk terms.
		\end{enumerate}
	\end{theorem}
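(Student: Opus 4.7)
The plan is to first establish the realization on a neighborhood of $0$ where $f$ is assumed plurisubharmonic, and then to use Free Universal Monodromy (proved earlier in the paper) to propagate it throughout $D$. The claim that $f$ is plurisubharmonic on all of $D$ and the essential uniqueness will fall out of this framework.

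For the local step, I would expand $f$ in its noncommutative power series at $0$ and analyze the complex Hessian $\Delta f(Z)[H]$, which is positive semidefinite by hypothesis on a neighborhood of $0$. A lurking--isometry argument, essentially a hereditary-plus-antihereditary refinement of the butterfly realization \eqref{butterflyrealization}, would factor this positive kernel and produce analytic $g, v^+, T$ together with coanalytic $v^-$, all vanishing at $0$ apart from $g$ (whose real part is pinned by $f(0)$), with $T$ strictly contractive near $0$ since $T(0)=0$, satisfying the claimed formula locally. This is the real--analytic analogue of the local content of \cite{DHKMV}.

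For the global step, I would invoke Free Universal Monodromy to analytically continue $g, v^+, T$ and coanalytically continue $v^-$ along every path in the connected free set $D$, producing single--valued free functions on $D$. The identity \eqref{introreal} itself propagates by analytic continuation of an equality between real analytic free functions, yielding the global realization. Plurisubharmonicity of $f$ on $D$ then follows by direct computation: $\Delta \textrm{Re } g \equiv 0$, and the Hessian of the quadratic form term factors, after rewriting $\bbms 1 & -T \\ -T^* & 1 \ebms^{-1}$ via Schur complements, into a manifestly positive expression whenever $T$ is contractive.

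I expect the main obstacle to be showing that the continued $T$ remains contractive throughout $D$. My plan is a connectedness argument on the set $D'\subseteq D$ of points near which a local realization with contractive $T$ can be produced from the power series expansion there. This set is open by construction and contains $0$; closedness would follow from the essential uniqueness clause (4) and continuity, which together force any limit of such realizations to be again of the same form. Since $D$ is connected, $D'=D$. For uniqueness itself, I would run a minimal--realization computation at $0$: the Taylor coefficients of $f$ determine those of $v^+, v^-, T$ up to an intertwining unitary, which at the level of free functions manifests precisely as the described direct sum with a junk function $J$; global uniqueness then follows by transporting uniqueness at $0$ along paths via monodromy.
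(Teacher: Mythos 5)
Your high-level architecture matches the paper's: a local realization from the power-series coefficients of $f$ at $0$, then Free Universal Monodromy to globalize, with essential uniqueness carried along. The local step is fine in spirit (the paper builds Hilbert spaces from the positive semidefinite middle matrices $C^{\pm}$ of the power-series coefficients rather than a lurking-isometry argument per se, but this is the same kind of GNS-flavored construction and yields the same canonical, minimal realization). The paper also proves plurisubharmonicity wherever $T$ is contractive by the same Hessian factorization you describe.

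The gap is in the global step. You write that you would ``invoke Free Universal Monodromy to analytically continue $g,v^+,T,v^-$ along every path,'' but monodromy only converts path-wise continuation into global single-valuedness; you must first \emph{establish} that the realization components continue along every path, with $T$ remaining contractive. Your subsequent sentence --- that the identity \eqref{introreal} ``propagates by analytic continuation of an equality between real analytic free functions'' --- is circular here: it presupposes the continuations you need to build. The paper handles this with three ingredients you don't supply: (i) an observation (the ``plurisubharmonicity is geometric'' corollary) that the radius on which the canonical local realization converges and $T$ is contractive depends \emph{only} on the radius of convergence of $f$'s power series, not on the assumed domain of plurisubharmonicity, so every point of $D$ supports a comparably sized canonical realization; (ii) explicit algebraic re-centering identities (Lemmas \ref{posreal}, \ref{restructurefine}, \ref{restructure}, \ref{movecenter}) that convert the realization transported from $0$ into one normalized to vanish at a new center $W$, so that clause (4) can be applied there; and (iii) an ampliation device (Observation \ref{resolvent}) reducing affine points $W\in D_m$ for $m>1$ to level-one points of $D^{[m]}$. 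Your connectedness argument for contractivity is the right instinct, and clause (4) is indeed the engine, but the ``closedness from uniqueness and continuity'' step is where the substance lies and needs the explicit re-centering machinery to make precise; without it, it is not clear how to compare a limit of continued realizations with the canonical one at the limit point, nor why $T$ stays a contraction up to and past the boundary of your set $D'$.
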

	By formally taking the complex Hessian of a function with such a representation, one sees that it must be plurisubharmonic wherever $T$ is contractive and all the elements are defined. That is, letting $$R(Z)= \bbm 1 & -T(Z) \\ -T(Z)^* & 1\ebm^{-1},$$
	$$w^+(Z,H) = \bbm Dv^+(Z)[H] \\ 0\ebm + R(Z)^{-1}DR(Z)[H]\bbm v^+(Z) \\ v^-(Z)\ebm,$$
	$$w^-(Z,H) = \bbm 0 \\ Dv^-(Z)[H]\ebm + R(Z)^{-1}D^*R(Z)[H]\bbm v^+(Z) \\ v^-(Z)\ebm,$$
	we have that, for functions of the form \eqref{introreal},
		$$\Delta f(Z)= w^+(Z,H)^*R(Z)w^+(Z,H) +w^-(Z,H)^*R(Z)w^-(Z,H)$$
	which is positive whenever defined and $T$ is contractive.
Theorem \ref{introrealization}
	is established as Theorem \ref{fullrepresentationformula}. Note that the realization formula in Theorem \ref{introrealization} is a realization of the form
	\eqref{butterflyrealization} composed with the analytic functions $g, v^+, T$ and $(v^-)^*.$
	For the case where $R_1$ is not finite dimensional, the notion of connectedness and real analyticity is more delicate, and when $0$ is not in the domain, our construction
	does not produce uniqueness for free. We eschew these matters, noting that in the infinite dimensional case, one could apply the direct limit construction as in \cite{royalroad},
	and in the affine case, one can probably use or adapt affine realization theories as developed in \cite{vvw12,porat}.
	
	In the case where $f$ is globally defined, it must be a sum of a hereditary plus an anti-hereditary hermitian square as in \cite{GHV,greene}. The following corollary directly from Liouville's theorem as $T(Z)$ must be constant as it is bounded.
	\begin{corollary}[Analytic Greene-Liouville theorem]
		Let $R_1$ be a finite dimensional vector space.
		Let $R_2$ be a subspace of operators.
		Let $f:\MU{R_1}\rightarrow \MU{R_2}$ be a uniformly real analytic free function which is plurisubharmonic on a neighborhood of $0$.
		Then, the function $f$ is plurisubharmonic everywhere and there are operator-valued free analytic functions $g, v^+$ and coanalytic $v^-$  defined everywhere such that 
			$$f(Z)=\textrm{Re }g(Z) + \bbm v^+(Z) \\ v^-(Z)\ebm^* \bbm v^+(Z) \\ v^-(Z)\ebm.$$
	\end{corollary}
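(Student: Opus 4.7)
The plan is to apply Theorem \ref{introrealization} with the domain taken to be the entire matrix universe $D = \MU{R_1}$. Since $\MU{R_1}$ is connected, uniformly open, and contains $0$, and $f$ is uniformly real analytic and plurisubharmonic near the origin, the theorem furnishes analytic $g, v^+, T$ and coanalytic $v^-$, all defined everywhere, with $T, v^+, v^-$ vanishing at $0$, such that
$$f(Z) = \textrm{Re }g(Z) + \bbm v^+(Z) \\ v^-(Z)\ebm^* \bbm 1 & -T(Z) \\ -T(Z)^* & 1\ebm^{-1}\bbm v^+(Z) \\ v^-(Z)\ebm,$$
and crucially $T$ is contractive on all of $\MU{R_1}$. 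Plurisubharmonicity globally is part of the conclusion of Theorem \ref{introrealization}, so nothing further is needed on that front.

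The key reduction is a free Liouville theorem: an entire operator-valued free analytic function that is uniformly norm-bounded on $\MU{R_1}$ must be constant. To see this, fix a level $n$ and note that $T_n$ is a bounded analytic map from the finite-dimensional space $M_n(\C) \otimes R_1$ into a Banach space; composing with any bounded linear functional and invoking classical Liouville shows that $T_n$ is constant. Compatibility of $T$ under direct sums and unitary conjugation then forces $T$ to be globally constant across all levels. Since $T(0) = 0$, we conclude $T \equiv 0$.

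With $T \equiv 0$ the middle matrix reduces to the identity, and the representation collapses to the desired form
$$f(Z) = \textrm{Re }g(Z) + \bbm v^+(Z) \\ v^-(Z)\ebm^* \bbm v^+(Z) \\ v^-(Z)\ebm.$$
The main obstacle is essentially cosmetic: one must confirm that the contractivity of $T$ supplied by Theorem \ref{introrealization} is the right kind of boundedness to feed into a free, operator-valued Liouville theorem. The slice-by-slice Banach-valued argument above handles this, and the paper itself flags the corollary as a direct consequence of Liouville, so no further ingredients are required.
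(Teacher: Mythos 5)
Your proof is correct and follows the same route as the paper: apply Theorem \ref{introrealization} on all of $\MU{R_1}$, invoke a vector-valued Liouville theorem to conclude the contractive entire $T$ is constant, use $T(0)=0$ to get $T\equiv 0$, and observe the middle matrix collapses to the identity. The paper states this in a single sentence; your write-up simply spells out the levelwise Banach-valued Liouville argument that the paper leaves implicit.
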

	
	To prove Theorem \ref{introrealization}, we need a preposterous result: the monodromy theorem holds for analytic free functions on open connected free sets. 
	\begin{theorem}[Free Universal Monodromy theorem]
		The monodromy theorem holds for free analytic functions on any open connected free set $\mathcal{D}.$
	\end{theorem}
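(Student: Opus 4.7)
The plan is to reduce everything to a ``direct sum trick'' in $\mathcal{D}_{2n}$. Assume $f$ analytically continues along every path in $\mathcal{D}$, fix any loop $\gamma:[0,1]\to \mathcal{D}_n$ based at $X_0$, and let $f^\gamma$ denote the germ at $X_0$ produced by continuing $f$ along $\gamma$; to prove universal monodromy it suffices to show $f^\gamma=f$ as germs at $X_0$. The idea is to lift $\gamma$ to two different loops in $\mathcal{D}_{2n}$, exhibit a unitary-conjugation homotopy between them, and then invoke the classical homotopy invariance of monodromy inside the finite-dimensional open set $\mathcal{D}_{2n}$.

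First I would introduce the loops
\[
\tilde\gamma_0(t) = \gamma(t)\oplus X_0, \qquad \tilde\gamma_1(t) = X_0 \oplus \gamma(t),
\]
both based at $X_0\oplus X_0\in \mathcal{D}_{2n}$. Next, pick a continuous path $A:[0,1]\to \mathrm{U}(2)$ with $A(0)=I_2$ and $A(1)$ the swap unitary (up to a phase); such a path exists as $\mathrm{U}(2)$ is path-connected. Because $X_0\oplus X_0 = X_0\otimes I_2$ commutes with $I_n\otimes A(r)$ for every $r$, the formula
\[
F(r,t)=\bigl(I_n\otimes A(r)\bigr)^{\ast}\,\tilde\gamma_0(t)\,\bigl(I_n\otimes A(r)\bigr)
\]
lands in $\mathcal{D}_{2n}$ by invariance of the free set under unitary conjugation, satisfies $F(r,0)=F(r,1)=X_0\oplus X_0$ for every $r$, and interpolates between $F(0,\cdot)=\tilde\gamma_0$ and $F(1,\cdot)=\tilde\gamma_1$. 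This supplies the required homotopy of based loops inside $\mathcal{D}_{2n}$.

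By hypothesis $f$ continues along every path in $\mathcal{D}$, hence along each $F(r,\cdot)$, so classical homotopy invariance of monodromy applied inside the open subset $\mathcal{D}_{2n}$ of a finite-dimensional complex vector space forces the continuations around $\tilde\gamma_0$ and $\tilde\gamma_1$ to produce the same germ at $X_0\oplus X_0$. To identify these germs I would restrict them to the block-diagonal submanifold: at $t=0$ the free identity $f(A\oplus B)=f(A)\oplus f(B)$ holds near $(X_0,X_0)$, and this identity persists under analytic continuation by uniqueness, so the continuation around $\tilde\gamma_0$ restricts to $(A,B)\mapsto f^\gamma(A)\oplus f(B)$ while that around $\tilde\gamma_1$ restricts to $(A,B)\mapsto f(A)\oplus f^\gamma(B)$. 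Equating these block-diagonal germs and reading off the upper-left $n\times n$ block yields $f^\gamma=f$ near $X_0$, trivializing the level-$n$ monodromy.

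The hard part will be the propagation-of-freeness step: justifying that the continuation of $f$ along the level-$2n$ loop $\tilde\gamma_0$ restricts on block-diagonals to the direct sum of the level-$n$ continuation along $\gamma$ with the original $f$. I would establish this by a finite subdivision $0=t_0<\cdots<t_N=1$ on which the continuation is represented by a single local free germ, so the free identity holds in each piece and uniqueness of analytic continuation forces it to persist across overlaps. Once the monodromy is trivial at every level, the resulting single-valued $f_n:\mathcal{D}_n\to M_n(\C)\otimes R_2$ automatically satisfy the free identities $f(X\oplus Y)=f(X)\oplus f(Y)$ and $f(V^{\ast}XV)=V^{\ast}f(X)V$, since these hold locally and are preserved by analytic continuation, delivering the global free extension on $\mathcal{D}$.
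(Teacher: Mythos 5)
Your proof is correct and is essentially the same as the paper's first (geometric) proof: both exploit that direct-summing the loop with the basepoint and conjugating by a one-parameter family of $2\times 2$ unitaries yields a null-homotopy inside $\mathcal{D}_{2n}$, after which classical homotopy invariance of analytic continuation plus the free block-diagonal identity trivializes the level-$n$ monodromy. The paper phrases this by noting the $SO(2)$-orbit surface is an $S^2$ and invokes simple-connectedness, eliding the block-diagonal propagation step; you make that step explicit and use a $U(2)$-path homotopy instead, which also lets you drop the paper's non-(self-)intersecting hypothesis on the paths, but the underlying mechanism is the same.
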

	Free Universal Monodromy is established in Section \ref{monodromysection}.
	Free Universal Monodromy implies various corollaries, such as the free inverse function theorem \cite{helkm11, PascoeMathZ, AMImp, KlepSpenko, Mancuso}
	and the universal existence of pluriharmonic conjugates. Furthermore, it shows any cohomology theory arising from sheaf theory of free analytic functions, as has been developed from differing angles \cite{AMYMan, aysym, KVVgerms},
	may be trivial on free sets.


\section{Free universal monodromy} \label{monodromysection}
	The monodromy theorem states that a complex analytic function (perhaps in several variables) on some ball $B$ analytically continues to a simply connected domain $D$ containing $B$ if and only if it analytically continues
	along each path in $D.$ Surprisingly, for analytic free functions, we see that the monodromy theorem holds on any connected free set. 
	\begin{theorem}[Free Universal Monodromy theorem]\label{monodromytheorem}
		The monodromy theorem holds for free analytic functions on any open connected free set $\mathcal{D}.$
	\end{theorem}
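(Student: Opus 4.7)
The plan is to reduce the theorem to showing that for every $X_0\in\mathcal{D}_n$ and every loop $\gamma:[0,1]\to\mathcal{D}_n$ based at $X_0$, the monodromy is trivial: $AC_\gamma f(X_0)=f(X_0)$. Once this is known, two different paths from $X$ to $Y$ give identical continuations by applying the loop case to $\gamma_1\gamma_2^{-1}$, so $f$ extends single-valuedly and freely to all of $\mathcal{D}$.

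The main idea is to lift $\gamma$ from $\mathcal{D}_n$ to $\mathcal{D}_{2n}$ and exploit the swap symmetry between the two copies. Form the loops
$$\alpha(t)=\gamma(t)\oplus X_0,\qquad \beta(t)=X_0\oplus\gamma(t),$$
both based at $X_0\oplus X_0\in\mathcal{D}_{2n}$. Writing $A:=AC_\gamma f(X_0)$, preservation of the free identity $f(Z\oplus W)=f(Z)\oplus f(W)$ under analytic continuation gives
$$AC_\alpha f(X_0\oplus X_0)=A\oplus f(X_0),\qquad AC_\beta f(X_0\oplus X_0)=f(X_0)\oplus A.$$
If $\alpha$ and $\beta$ are homotopic rel basepoint in $\mathcal{D}_{2n}$, then classical monodromy applied to the continuous square yields $AC_\alpha f=AC_\beta f$ at $X_0\oplus X_0$, and equating block-diagonal matrices forces $A=f(X_0)$.

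The rel-basepoint homotopy is produced via the key observation that the copy of $U(2)$ embedded in $U(2n)$ as $U(2)\otimes I_n$ lies inside the stabilizer of $X_0\oplus X_0=I_2\otimes X_0$: for any $U\in U(2)$,
$$(U\otimes I_n)^*(I_2\otimes X_0)(U\otimes I_n)=U^*U\otimes X_0=I_2\otimes X_0.$$
Choose a continuous path $U:[0,1]\to U(2)$ with $U(0)=I_2$ and $U(1)$ the swap $\begin{pmatrix}0&1\\1&0\end{pmatrix}$, possible because $U(2)$ is path connected, and set $V(s)=U(s)\otimes I_n\in U(2n)$. Define
$$H(s,t)=V(s)^*(\gamma(t)\oplus X_0)V(s),$$
which lies in $\mathcal{D}_{2n}$ by closure under unitary conjugation. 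By construction $H(0,t)=\alpha(t)$, $H(1,t)=\beta(t)$, and $H(s,0)=H(s,1)=V(s)^*(X_0\oplus X_0)V(s)=X_0\oplus X_0$ for every $s$, so $H$ is the desired rel-basepoint homotopy from $\alpha$ to $\beta$.

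The main obstacle I anticipate is the rigorous verification of the direct-sum decomposition $AC_\alpha f(X_0\oplus X_0)=A\oplus f(X_0)$ at the level of germs. This rests on the fact that $f(Z\oplus W)=f(Z)\oplus f(W)$ is an analytic identity on the original domain of $f$ and therefore persists under analytic continuation wherever both sides are defined: at each $\alpha(t)=\gamma(t)\oplus X_0$, the germ of the continuation of $f$ in $\mathcal{D}_{2n}$ splits as the direct sum of the germ of $f$ continued along $\gamma|_{[0,t]}$ in the first factor and the static germ of $f$ at $X_0$ in the second. This bookkeeping, together with the standard reduction from loops to paths sharing endpoints, is the only routine step remaining once the swap-trick homotopy above is in hand.
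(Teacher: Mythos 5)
Your proof is correct and matches the paper's first (geometric) proof in substance: both direct-sum and conjugate by a one-parameter family of rotations ending at the swap, then invoke simple connectivity (you via the explicit rel-basepoint homotopy $H(s,t)=V(s)^*(\gamma(t)\oplus X_0)V(s)$, the paper via the $SO(2)$-orbit set $\mathfrak{S}\cong S^2$) and finally split the continuation using the free identity $f(Z\oplus W)=f(Z)\oplus f(W)$ to equate the two block entries. Your version is a tidier formulation of the same device, since the explicit homotopy avoids the paper's genericity requirement (non-intersecting, non-self-intersecting paths, needed there so that $\mathfrak{S}$ is genuinely a $2$-sphere) and isolates the one bookkeeping step, that continuation respects the block-diagonal splitting, which the paper leaves implicit; note the paper also gives a second, independent proof via a triangular-similarity blow-up argument, which is a genuinely different route you did not pursue.
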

	We give two proofs. One is geometric and reveals why the result is true-- given two paths form one point to another, if one direct sums them and takes an $SO_2$ orbit, you get a sphere, which is simply connected.
	The second is purely in the language of free analysis and uses standard techniques, but gives little intuition as to what is ``really going on."  However, the free proof has the distinct advantage that it is totally elementary.
	\subsection{The geometric several complex variables proof}
		We now give a proof of the Free Universal Monodromy theorem based on geometry in several complex variables.
		\begin{proof}
			Suppose not, consider two non-intersecting non-self-intersecting paths $\gamma_1$ and $\gamma_2$ in $\mathcal{D}$ with the same endpoints.
			(That is, $\gamma_1(t)=\gamma_2(t)$ if and only if $t$ is equal to $0$ or $1.$)
			Consider the set
				$$\mathfrak{S} = \left\{\bpm a & b \\ -b & a\epm\bpm\gamma_1(t)& \\ & \gamma_2(t)\epm\bpm a & -b \\ b & a\epm|a^2+b^2=1, a,b\in \mathbb{R}, t \in[0,1]\right\}.$$
			A straightforward calculation gives that $\mathfrak{S}\cong S^2$ which is simply connected and thus satisfies the classical monodromy theorem from several complex variables.
		\end{proof}
	\subsection{The free proof}
		We now give a proof of the Free Universal Monodromy theorem in the style of free analysis.
		That is, we give an algebraic calculation using standard identities coming from the similarity and direct sum preserving structure of free functions
		that proves the theorem, but is perhaps conceptually deficient in terms of developing an inuition as to why. The technique is somewhat similar to that used in the
		proof of various noncommutative inverse and implicit function theorems \cite{PascoeMathZ, KlepSpenko, AMImp, Mancuso, DKVA, helkm11}.
		Later, we will use the monodromy theorem to recover inverse function theorems.
		\begin{proof}
			Suppose not, consider two paths $\gamma_1$ and $\gamma_2$ in $\mathcal{D}$ with the same endpoints.
			Note that $f$ must analytically continue along the path 
				$$\hat{\gamma}(t) = \bpm\gamma_1(t)&  \\ & \gamma_2(t)\epm.$$
			Choose $\epsilon>0$ small enough such that the same analytic continuation as for $\hat{\gamma}$ serves as an analytic continuation along the following path
				$$\gamma(t) = \bpm\gamma_1(t)& \epsilon\frac{\gamma_1(t)-\gamma_2(t)}{\|\gamma_1(t)-\gamma_2(t)\|^{1/2}} \\ & \gamma_2(t)\epm.$$
			Note that,
				$$\gamma(t)=\bpm 1 & \frac{\epsilon}{\|\gamma_1(t)-\gamma_2(t)\|^{1/2}}\\0 & 1\epm^{-1}\hat{\gamma}(t)
				\bpm 1 & \frac{\epsilon}{\|\gamma_1(t)-\gamma_2(t)\|^{1/2}}\\0 & 1\epm.$$
			Therefore,
				$$f(\gamma(t))=\bpm 1 & \frac{\epsilon}{\|\gamma_1(t)-\gamma_2(t)\|^{1/2}}\\0 & 1\epm^{-1}f(\hat{\gamma}(t))
				\bpm 1 & \frac{\epsilon}{\|\gamma_1(t)-\gamma_2(t)\|^{1/2}}\\0 & 1\epm$$
			for $t\in (0,1)$ since an analytic continuation of $f$ must preserve similarities.
			That is,
				$$f\bpm\gamma_1(t)& \epsilon\frac{\gamma_1(t)-\gamma_2(t)}{\|\gamma_1(t)-\gamma_2(t)\|^{1/2}} \\ & \gamma_2(t)\epm
			=\bpm f(\gamma_1(t))& \epsilon\frac{f(\gamma_1(t))-f(\gamma_2(t))}{\|\gamma_1(t)-\gamma_2(t)\|^{1/2}} \\ & f(\gamma_2(t))\epm.$$
			If the analytic continuation along $\gamma_1$ and $\gamma_2$ disagreed, then the analytic continuation along $\gamma$ would blow up at $1,$ so we are done.
		\end{proof}
	\subsection{Existence of harmonic conjugates}
		An immediate consequence of the Free Universal Monodromy theorem is that free pluriharmonic functions have pluriharmonic conjugates.
		We say a self-adjoint valued real free function $u$ on a set $\mathcal{D}$ is \dfn{pluriharmonic} if it is pluriharmonic at each level. That is, $\Delta u(Z)(H) \equiv 0$ for
		all $Z$ and $H.$ We say $u$ has a pluriharmonic conjugate if $u = \textrm{Re } f$ for some free analytic function $f.$
		Free pluriharmonic functions and related topics have been previously considered in \cite{GHV, greene, DHKMV,POP06, POP09, POP08, POP16,POP17,POP19,POP20multi,POP20brown}.
		\begin{corollary}[Existence of harmonic conjugates]
			 A pluriharmonic real free function defined on an open connected free set has a pluriharmonic conjugate.
		\end{corollary}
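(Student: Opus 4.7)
The plan is to construct pluriharmonic conjugates locally by a power-series argument and then invoke the Free Universal Monodromy Theorem (Theorem \ref{monodromytheorem}) to assemble them into a global object. First I would establish a local existence statement: at any $Z_0 \in \mathcal{D}_n$ the real analytic free function $u$ has a convergent noncommutative power series in $(Z - Z_0)$ and $(Z - Z_0)^*$ on a uniformly open neighborhood. Applying the product and sum rules for $D, D^*$ monomial-by-monomial shows that each word containing both an analytic letter $z_i$ and an antianalytic letter $z_j^*$ produces a nonzero contribution to $\Delta u(Z)[H]$ that cannot cancel against contributions from other monomials. The hypothesis $\Delta u \equiv 0$ therefore forces every such mixed monomial to vanish, and collecting the surviving purely analytic and purely antianalytic terms yields a decomposition $u = f + g^*$ for free analytic $f,g$ on a neighborhood of $Z_0$. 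Self-adjointness of $u$ forces $f - g$ to be analytic and self-adjoint valued; since a free analytic function equalling an antianalytic function must be constant (both $D$ and $D^*$ vanish on it), $f - g$ is a self-adjoint constant $c$, and $u = \textrm{Re}(2f - c)$ locally.

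Next I would promote this to an analytic continuation along paths. The local conjugate is unique up to an imaginary constant: if $u = \textrm{Re}\, f_1 = \textrm{Re}\, f_2$ on a connected neighborhood, then $f_1 - f_2$ is analytic with identically vanishing real part, so $f_1 - f_2$ is a purely imaginary constant (by the same analytic-equals-antianalytic argument). Given a path $\gamma: [0,1] \to \mathcal{D}$ from a fixed basepoint $Z_0$, I would cover $\gamma([0,1])$ by finitely many neighborhoods on which local pluriharmonic conjugates exist and inductively adjust the imaginary constant in each chart so that consecutive conjugates agree on overlaps, producing an analytic continuation of the initial local conjugate along $\gamma$.

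Finally, the Free Universal Monodromy Theorem guarantees that continuations along any two paths in $\mathcal{D}$ with the same endpoints agree, so the local analytic conjugates glue into a single free analytic function $f$ defined on all of $\mathcal{D}$ satisfying $\textrm{Re}\, f = u$. The main obstacle is the local step, namely verifying that the pluriharmonic condition on a noncommutative power series really does kill all mixed monomials and leaves only purely analytic and antianalytic terms; once that local decomposition is in hand, the globalization is an immediate application of Theorem \ref{monodromytheorem}.
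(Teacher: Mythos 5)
Your proposal follows the paper's own strategy: local pluriharmonic conjugates, uniqueness up to an imaginary constant, analytic continuation along paths, then Free Universal Monodromy to globalize. Your mixed-monomial calculation---that $\Delta u \equiv 0$ forces the coefficient of every word containing both an analytic letter $z_i$ and a coanalytic letter $z_j^*$ to vanish, so that locally $u = f + g^*$ with $f,g$ analytic---is a correct and more explicit fleshing-out of what the paper only calls ``power series methods.''

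The gap is at the step where you assert that this gives a \emph{free} analytic $f$ on a neighborhood of an arbitrary $Z_0 \in \mathcal{D}_n$. When $Z_0$ is not a scalar multiple of the identity, the expansion of $u$ about $Z_0$ has $M_n(\mathbb{C})$-valued multilinear coefficients, and the ``analytic part'' of that level-$n$ object is merely an analytic function on $\mathcal{D}_n$, not yet a free function on a free neighborhood of $Z_0$. Free Universal Monodromy requires a free analytic seed on a free open ball, and your construction only supplies one directly when the domain contains a scalar point---which a connected open free set need not (for instance $\{X : [X,X^*]\ \text{nonsingular}\}$ has empty level $1$). The paper closes this with a level-raising device: induce $u^{[m]}$ on $D^{[m]} = \bigcup D_{nm}$ over the enlarged coefficient algebra $R_1 \otimes M_m(\mathbb{C})$, so that a chosen $X_0 \in D_m$ becomes a level-one point, produce a free conjugate $\tilde{f}$ of $u^{[m]}$ there via the power-series argument together with Free Universal Monodromy, and then pull back by the diagonal inclusion $A$ and the $(1,1)$-corner projection $P$ so that $f = P \circ \tilde{f} \circ A$ satisfies $\textrm{Re}\, f = u$. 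You would need to supply that step, or else restrict the statement to domains containing a scalar point.
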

		\begin{proof}
			Let $u$ be a pluriharmonic real free function defined on an open connected free set $D.$ Note that $u$ must be real analytic.

			Note that at each point in the domain, we can write $u=\textrm{Re }f$ locally for some complex analytic function $f$.
			Moreover, any two such solutions for $f$ differ by an imaginary constant.
			It is then an elementary exercise to show that $f$ can be analytically continued along any path. 

			If $0$ were in the domain $D,$ power series methods show that $f$ can be chosen to be some specific free function on a free open set containing $0$
			and therefore, by Free Universal Monodromy, $f$ can be globally defined.
			We now discuss how to acheive this at an affine point.
			Choose a point $X_0 \in D_m.$
			We note that given $D \subseteq \MU{R_1}$ and $u:D\rightarrow \MU{R_2}$ free pluriharmonic, we can induce 
			a pluriharmonic function $u^{[m]}:D^{[m]}\rightarrow \MU{R_2\otimes M_m(\mathbb{C})}$ on $D^{[m]}=\bigcup D_{nm} \subseteq \MU{R_1\otimes M_m(\mathbb{C})}$
				by defining $u^{[m]}(Z)=u(Z).$
			Now $u^{[m]}(Z+X_0)$ has a power series about $0,$ (here $X_0$ is viewed as an element of $D^{[m]}_1$)
			and therefore is $u^{[m]}$ the real part of a free analytic function $\tilde{f}$ on all of $D^{[m]}.$
			Define the diagonal inclusion map $A: D \rightarrow D^{[m]}$ defined by 
				$$A(Z) = \bbm Z & & & \\ & Z & & \\ & & \ddots & \\ & & & Z \ebm.$$
			Define the projection map $P: \MU{R_2\otimes M_m(\mathbb{C})}\rightarrow \MU{R_2}$ to be
			$P((X_{ij})_{1 \leq i,j\leq m}) = X_{11}.$
			The function $f= P \circ \tilde{f} \circ A$ satisfies $\textrm{Re }f=u.$
		\end{proof}
		\subsubsection{The noncommutative annulus}
			Consider the set
				$$\mathbb{A}=\{X|\|X\|<2, \|X^{-1}\|<2\}.$$
			Apparently, every free pluriharmonic function on $\mathbb{A}$ has a pluriharmonic conjugate.
			However, the set $\mathbb{A}$ appears on face to be an analogue of the annulus, which in classical complex analysis famously has functions defined on
			it which do not have harmonic conjugates,
			such as $\log |z|$, as the annulus is not simply connected. Therefore, the Free Universal Monodromy theorem could be considered somewhat
			disturbing. The resolution is obviously that $\log |X|$ is not pluriharmonic on $\mathbb{A}.$
			The following corollary characterizes all free pluriharmonic functions on $\mathbb{A},$ which follows from the fact that they
			must be the real part of an analytic function, which must itself have a Laurent series.
			\begin{corollary}[Pluriharmonic free functions on the noncommutative annulus arise from Laurent series]
			 Let 
				$$\mathbb{A}=\{X|\|X\|<2, \|X^{-1}\|<2\}.$$
			Every pluriharmonic free function on $\mathbb{A}$ is of the form
				$$\textrm{Re }\sum^{\infty}_{n=-\infty} c_nX^n.$$
			\end{corollary}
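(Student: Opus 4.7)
The plan is to combine the preceding existence-of-conjugates corollary with the classical Laurent expansion on the scalar cross-section, then upgrade back to a noncommutative identity.

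First, I would verify that $\mathbb{A}$ is an open connected free set, so that the preceding corollary applies. Closure under direct sum and unitary conjugation is immediate from the behaviour of the operator norm and the inverse norm, and each $\mathbb{A}_n$ is open. Path connectivity at each level follows from polar decomposition $X=UP$, interpolating the positive factor via $t\mapsto P^t$ and the unitary factor through a continuous path of unitaries from $I$ to $U$; since unitaries are isometries and the eigenvalues of $P^t$ lie in $(1/2,2)$ whenever those of $P$ do, the whole path stays in $\mathbb{A}_n$.

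Given a pluriharmonic free function $u$ on $\mathbb{A}$, the existence-of-conjugates corollary produces a free analytic function $f$ on $\mathbb{A}$ with $\textrm{Re }f=u$. Restricting $f$ to scalar matrices $X=zI$ yields an ordinary holomorphic function on the planar annulus $\{1/2<|z|<2\}$, which admits a Laurent expansion
$$f(zI)=\sum_{n=-\infty}^{\infty}c_nz^n.$$
The Cauchy--Hadamard estimates $\limsup_{n\to\infty}|c_n|^{1/n}\leq 1/2$ and $\limsup_{n\to\infty}|c_{-n}|^{1/n}\leq 1/2$ then show that $g(X):=\sum_{n=-\infty}^{\infty}c_nX^n$ converges absolutely and uniformly on compacta of $\mathbb{A}$, defining a genuine free analytic function there.

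The remaining step, and the real crux, is to show $f=g$ on all of $\mathbb{A}$ given agreement on scalar matrices. In this single-variable setting this is lighter than it first looks: both $f$ and $g$ coincide with the holomorphic functional calculus of $\varphi(z)=\sum c_nz^n$ on diagonal matrices by the direct-sum property, on diagonalizable matrices in $\mathbb{A}_n$ by the similarity property, and hence on all of $\mathbb{A}_n$ by continuity together with density of diagonalizables with spectrum in the scalar annulus. Taking real parts of $f=g$ then yields the claimed Laurent representation of $u$. I expect the main obstacle to be this density/continuity step, where one must be sure that the free analytic $f$ furnished by the conjugates corollary has no extra freedom beyond its scalar restriction; this is ultimately controlled by direct sums, similarity, and continuity rather than by anything deeper, the heavy path-lifting work having already been absorbed into the Free Universal Monodromy theorem.
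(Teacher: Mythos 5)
Your proof is correct and follows essentially the same two-step route the paper sketches: apply the existence-of-conjugates corollary (noting that $\mathbb{A}$ is an open connected free set), then observe the resulting free analytic function on $\mathbb{A}$ must be a Laurent series. The paper states the second step without argument; you supply the standard justification that a single-variable free analytic function on $\mathbb{A}$ is pinned down by its scalar cross-section via agreement on diagonal matrices (direct sums), then diagonalizable matrices (similarity preservation, keeping track that the diagonal model stays in $\mathbb{A}$ because the spectrum sits in the scalar annulus), then all of $\mathbb{A}_n$ by density of diagonalizables and continuity — which is the right way to fill the gap.
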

	\subsection{The local-global inverse function theorem}
		The inverse function theorem in free analysis is surprisingly strong.
 A function whose derivative is pointwise bijective
		is bijective onto its range \cite{PascoeMathZ, KlepSpenko, AMImp, Mancuso, DKVA, helkm11, augat}.
Furthermore, in the free case, the Jacobian conjecture holds: a noncommuative polynomial map with pointwise nonsingular derivative is invertible and has a polynomial inverse \cite{PascoeMathZ, augat}, the proof of which relies heavily on the strength of the free inverse function theorem.

 We now show how the free inverse function theorem follows from Free Universal Monodromy.
		We consider this particularly appealing because it gives a geometric explanation for something that algebraically looks like a coincidence. We should note, however, that the original version
		in \cite{PascoeMathZ} does not require analyticity assumptions and uses some algebraic version of the derivative. (Perhaps, this suggests there is an
		algebraic analogue of Free Universal Monodromy that holds in vaster generality.)
		\begin{theorem}[Pascoe \cite{PascoeMathZ}]\label{pascoemathz}
			Let $U \subseteq \MU{\mathbb{C}^d}$ be an open connected free set.
			Let $F: U \rightarrow \MU{\mathbb{C}^d}$ be an analytic free function.
			The following are equivalent:
			\begin{enumerate}
				\item $DF(X)$ is always nonsingular,
				\item $F^{-1}$ exists and is an analytic free function.
 			\end{enumerate}
		\end{theorem}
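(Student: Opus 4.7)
The plan is to derive this from Free Universal Monodromy (Theorem \ref{monodromytheorem}), which lets me globalize the classical analytic inverse function theorem. The direction $(2)\Rightarrow(1)$ is immediate: differentiating $F^{-1}\circ F=\mathrm{id}$ via the chain rule shows that $DF(X)$ is a left inverse of $DF^{-1}(F(X))$, hence nonsingular.

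For $(1)\Rightarrow(2)$, I would first verify that $F(U)$ is itself an open connected free set. Freeness follows from $F(X)\oplus F(Y)=F(X\oplus Y)$ and $V^*F(X)V=F(V^*XV)$; each slice $F(U)_n=F_n(U_n)$ is path connected as the continuous image of the path connected $U_n$; and openness at each level follows from the classical inverse function theorem applied pointwise, using the nonsingularity of $DF$.

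Next, I would fix a base point $X_0\in U$ and set $Y_0=F(X_0)$. The classical inverse function theorem provides an analytic local inverse $G_0$ of $F$ at $Y_0$. A standard open--closed argument shows $G_0$ analytically continues along every path in $F(U)$: at each point along such a path, the classical inverse function theorem furnishes a fresh analytic local inverse of $F$, and uniqueness of local inverses forces these germs to agree on overlaps with any extension of $G_0$. Free Universal Monodromy, applied to the open connected free set $F(U)$, then glues these continuations into a single globally defined analytic function $G:F(U)\to\MU{\mathbb{C}^d}$.

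It remains to show that $G$ is free and inverts $F$. For the direct-sum identity $G(Y_1\oplus Y_2)=G(Y_1)\oplus G(Y_2)$, both sides are analytic on the connected set $F(U)_m\times F(U)_n$ and agree in a neighborhood of a suitable seed pair, because the direct sum of two local inverses of $F$ is itself a local inverse of $F$ at the direct-sum point and such local inverses are unique; the identity principle then gives global agreement, and unitary invariance is handled in the same way. Once $G$ is known to be free, $G\circ F$ is an analytic free function equal to the identity in a neighborhood of $X_0$, hence equal to the identity throughout the connected $U$; this simultaneously identifies $F^{-1}$ with $G$ and forces $F$ to be injective, while $F\circ G=\mathrm{id}_{F(U)}$ follows from the symmetric computation. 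The step I expect to be the main obstacle is the bookkeeping in the freeness check: one must produce compatible seed pairs at every pair of matrix levels $(m,n)$ with $F(U)_m\times F(U)_n$ nonempty, which is arranged using direct-sum constructions starting from $X_0$ (supplemented, if necessary, by representatives from any other nonempty levels of $U$). All genuinely geometric content has been bundled into Free Universal Monodromy.
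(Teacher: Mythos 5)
Your strategy---apply Free Universal Monodromy to the germ of the local inverse on $F(U)$---is the same as the paper's. A small slip first: in $(2)\Rightarrow(1)$, the chain rule on $F^{-1}\circ F=\mathrm{id}$ gives $DF^{-1}(F(X))\,DF(X)=\mathrm{id}$, so $DF(X)$ is a \emph{right} inverse of $DF^{-1}(F(X))$, not a left inverse; the conclusion that $DF(X)$ is nonsingular still follows since the tangent spaces are finite dimensional.

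The substantive problem is the claim that ``a standard open--closed argument shows $G_0$ analytically continues along every path in $F(U)$,'' justified by ``uniqueness of local inverses.'' That is not correct. Openness of $A=\{t: G_0 \text{ continues along }\delta|_{[0,t]}\}$ is trivial, but closedness is precisely where the theorem's content lives, and uniqueness of local inverses does not supply it: the continuation lifts $\delta(t)$ to a preimage $\tilde\delta(t)$ which need not converge to the preimage chosen by a ``fresh'' local inverse at $\delta(\sup A)$---the lifted path can escape $U$. For a generic local biholomorphism this genuinely fails; for instance $e^z$ on a horizontal strip of height greater than $2\pi$ has nonvanishing derivative, yet its local inverse does not continue along every path in the image. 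What rescues the free case is not an open--closed argument but the similarity trick underlying the free proof of Free Universal Monodromy: if $X\neq X'$ in $U$ satisfy $F(X)=F(X')=Y$, then $X\oplus X'\in U$ and
\[
F\begin{pmatrix} X & a(X-X')\\ 0 & X'\end{pmatrix}
=\begin{pmatrix}1 & -a\\ 0 & 1\end{pmatrix}\bigl(F(X)\oplus F(X')\bigr)\begin{pmatrix}1 & a\\ 0 & 1\end{pmatrix}
=Y\oplus Y
\]
for all small $a$, contradicting the local injectivity of $F$ at $X\oplus X'$ that nonsingularity of $DF$ guarantees. This gives injectivity of $F$ outright, which both supplies the path-lifting (hence the monodromy hypothesis) and, arguably, makes the monodromy detour optional. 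You need to insert an argument of this kind before invoking Free Universal Monodromy; to be fair, the paper's own two-sentence proof is equally terse on exactly this point.
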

		\begin{proof}
			For each point in the range, $F^{-1}$ can be locally defined by the classical inverse function theorem. Therefore, $F^{-1}$ can be globally defined on the range by Free Universal Monodromy.
		\end{proof}
	\subsection{The logarithm}
		Classical monodromy theory gives insight into when one can take the logarithm or square root of a function. In the free case, the utility is somewhat limited.
		In terms of the inverse and implicit function theorems, the derivative of the implicit defining function is not always full rank.
		We now give an example of obstructions to defining the logarithm of a nonsingular function which is related to questions of convergence of the
		Baker-Campbell-Hausdorff formula \cite{CBHobstructions} which we discuss at the end of this subsection.
		\subsubsection{Logarithms of analytic functions on simply connected domains taking values in $GL_n$ may not exist}
			Let $U\subseteq \mathbb{C}$ simply connected. Let $f: U \rightarrow \mathbb{C}\setminus \{0\}$ be analytic. It is a well known fact that there
			is a function $g:U \rightarrow \mathbb{C}$ such that $f=e^g.$ We consider the corresponding question for matrix valued functions $f: U \rightarrow GL_n(\mathbb{C}).$
			We will give an example of a polynomial function $f: \mathbb{D} \rightarrow GL_n(\mathbb{C})$
			 such that $f(0)=1$ such that the principal branch of $\log f$ does not analytically continue to the whole of $\mathbb{C}$.

			\begin{theorem}
			Define 
			$$f(z)=\bbm 1 & z \\ z & 1+ z^2 \ebm.$$
			We consider the function 
			$$\log f = \sum \frac{(-1)^{n+1}}{n}\bbm 0& z \\ z & z^2 \ebm^n = \sum c_n z^n.$$
			The radius of convergence for the series $\sum c_nz^n$ for $\log f$ is $2$ and
			therefore the largest disk around $0$ it can analytically continue to is $2\mathbb{D}.$
			\end{theorem}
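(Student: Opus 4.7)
The plan is to exploit the $2\times 2$ structure of $f$: its determinant is identically one and its trace is the simple expression $2+z^2$. Parametrizing the eigenvalues of $f(z)$ as $e^{\pm\nu(z)}$, so that $\pm\nu(z)$ are the eigenvalues of any analytic $\log f$, the relation $2\cosh\nu = 2+z^2$ becomes $\sinh^2(\nu/2)=z^2/4$. Choosing the branch with $\nu(0)=0$ yields the clean closed form
\begin{equation*}
\nu(z) \;=\; 2\sinh^{-1}(z/2).
\end{equation*}

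By Cayley-Hamilton, wherever the eigenvalues are distinct one may write $\log f = \alpha(z)I + \beta(z)f$. The scalar equations $\alpha+\beta e^{\pm\nu}=\pm\nu$ give $\beta=\nu/\sinh\nu$ and $\alpha=-\beta\cosh\nu$. Using $\cosh\nu=1+z^2/2$ and $\sinh\nu=z\sqrt{1+z^2/4}$, a short calculation reduces this to
\begin{equation*}
\log f(z) \;=\; \frac{2\sinh^{-1}(z/2)}{\sqrt{1+z^2/4}}\,\begin{pmatrix} -z/2 & 1 \\ 1 & z/2 \end{pmatrix}.
\end{equation*}
This is then the natural candidate for analytic continuation of $\sum c_nz^n$, and the two must agree by uniqueness of power series at the origin.

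The classical function $\sinh^{-1}(w)=\log(w+\sqrt{1+w^2})$ has branch points exactly at $w=\pm i$ and Taylor radius $1$ at the origin, so both $\sinh^{-1}(z/2)$ and $\sqrt{1+z^2/4}$ have branch points at $z=\pm 2i$ and radius $2$ at the origin. The closed form above is therefore holomorphic on all of $2\mathbb{D}$, showing that the radius of convergence of $\sum c_nz^n$ is at least $2$. To see that the radius is exactly $2$, one expands $\sinh^{-1}(i+u)=\tfrac{i\pi}{2}+c\sqrt{u}+O(u)$ with $c\neq 0$, from which $\sinh^{-1}(z/2)/\sqrt{1+z^2/4}$ behaves like $(\mathrm{const})/\sqrt{z-2i}$ as $z\to 2i$; the matrix factor does not vanish there, so $\log f$ actually blows up at $z=\pm 2i$, preventing analytic continuation to any disk $r\mathbb{D}$ with $r>2$.

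The main obstacle is this last step. Since $f(\pm 2i)=-I+N$ with $N$ rank-one nilpotent, one might naively try to continue $\log f$ through the coalescence of eigenvalues by the Jordan-block logarithm; so one must rule out that the singularity is merely apparent. The scalar reduction through $\nu$ is what makes the branching manifest, most cleanly in $\det(\log f)=-\nu(z)^2=-4\sinh^{-1}(z/2)^2$, which retains a genuine square-root branch at $z=\pm 2i$ since squaring does not remove the $\sqrt{z-2i}$ term. Hence no single-valued holomorphic extension of $\log f$ to any larger disk exists, completing the argument.
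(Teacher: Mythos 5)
Your proof is correct and takes a genuinely different route from the paper. Where the paper runs a structural argument built on the invariant $\operatorname{Tr}\log f=\log\det f\equiv 0$ together with the Jordan structure of $f(\pm 2i)$ (nondiagonalizable with repeated eigenvalue $-1$, so any matrix logarithm there must itself be nondiagonalizable with repeated eigenvalue an odd multiple of $i\pi$, forcing a nonzero trace — contradiction), you instead solve the problem explicitly: reduce to the eigenvalue parameter $\nu=2\sinh^{-1}(z/2)$, use Cayley--Hamilton to get the closed form $\log f(z)=\tfrac{2\sinh^{-1}(z/2)}{\sqrt{1+z^2/4}}\bigl(\begin{smallmatrix}-z/2 & 1\\ 1 & z/2\end{smallmatrix}\bigr)$, and read off both directions from it. The two approaches in fact encode the same obstruction — your $\nu(\pm 2i)=\pm i\pi\neq 0$ is exactly the paper's statement that the repeated eigenvalue of $\log f(\pm 2i)$ cannot be zero — but yours makes the lower bound on the radius entirely concrete (the closed form is manifestly holomorphic on $2\mathbb D$), at the cost of more computation, whereas the paper handles the lower bound by the softer observation that the eigenvalues vary analytically off the singular set. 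One small remark: your blowup at $\pm 2i$ of the scalar prefactor $\sinh^{-1}(z/2)/\sqrt{1+z^2/4}$ (constant-over-$\sqrt{z-2i}$, since the numerator tends to $i\pi/2\neq 0$ while the denominator vanishes, and the matrix factor is nonzero there) already rules out any extension past $\pm 2i$; the $\det(\log f)=-4\sinh^{-1}(z/2)^2$ branch-point check is a nice independent sanity check but is not strictly needed once the unbounded blowup is established.
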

			\begin{proof}
				First, note that, near $0,$ and thus by analytic continuation everywhere,
					$$\textrm{Tr } \log f(z) = \log \det f(z) \equiv 0.$$ 
				Second, note that $f(z)$ has repeated eigenvalues only at $-2i, 2i,$ and $0.$ Moreover, $f(z)$ is not diagonalizable at $\pm 2i$ and the repeated
				eigenvalue is $-1.$
				Thirdly, note that if $M$ is a $2$ by $2$
				nondiagonalizable matrix and $e^A=M,$ then $A$ must be nondiagonalizable and thus have repeated eigenvalues.
				Suppose $f(z)$ analytically continued along the path from $0$ to $\pm 2i,$ then $f(\pm 2i)$ must have repeated eigenvalues whose exponential is
				$-1$ and therefore the trace of $\log f(\pm 2i)$ is nonzero, which would be a contradiction.
				
				Note that away from the points where $f(z)$ has repeated eigenvalues, the eigenvalues are given by analytic functions and therefore $\log f$ continues
				to any
				simply connected set containing $0$ and avoiding $\pm 2i.$
			\end{proof}

			Should we be surprised the radius is exactly $2?$ Evidently not. Again, consider the eigenvalues of
			$$\bbm 1 & z \\ z & 1+ z^2 \ebm.$$
			As we approach to $2i,$ the two eigenvalues approach $-1$, but one approaches from a clockwise
			direction while the other takes a counterclockwise approach, which one see
			by looking at the determinant and the trace of $f(z)$ along the path from $0$ to $2i$.
			If we upper triangularize $f(z)$ along this path, we have that
					$$f(z) = u(z)^*\bbm\lambda_1(z) & d(z) \\ 0 & \lambda_2(z)\ebm u(z)$$
			where $u(z)$ is unitary and $d(z)$ is some function which is non-zero at $2i.$
			If we assume (for heuristic purposes only) that $u(z)$ and $d(z)$ were chosen real analytically, we have that
			$$\log f(z) = u(z)^*\bbm \log \lambda_1(z) & d(z)\frac{\log\lambda_1(z)-\log\lambda_2(z)}{\lambda_1(z)-\lambda_2(z)} \\ 0 & \log \lambda_2(z)\ebm u(z)$$
			which blows up as the branches of $\log$ do not agree.
			That is, the path from $2i$ to $-2i$ witnesses the path along which the function cannot continue, and therefore monodromy does not apply.
			A similar argument shows that $\sqrt{f(z)}$ cannot be defined on all of $\mathbb{D}.$

		\subsubsection{The Martin-Shamovich logarithm problem and relation to the Baker-Campbell-Hausdorff formula}
			Martin and Shamovich posed the problem of whether or not an element of the
			Fock space which takes invertible values on the row ball has a well-defined logarithm on the row ball.
			We now adapt our example to show this is impossible.

			Consider $F(X,Y) = e^{X}e^{Y}.$ Note $F(X,Y)$ is invertible for any inputs. Substitute in the tuple
			$$X= \bbm 0 & 0 \\ z & 0 \ebm, Y = \bbm 0 & z \\ 0 & 0 \ebm.$$
			Now $F(X,Y)=f(z)$ and therefore has $\log F$ has finite radius of convergence.

			The quantity $\log e^Xe^Y$ is of importance in Lie theory, the power series for it at $0$ is called the Baker-Campbell-Hausdorff formula.
			The series for $\log e^Xe^Y$ is known to converge whenever $\|X\|+\|Y\| \leq \frac{\log 2}{2}$ \cite{CBHobstructions}.
			The fact that the convergence is not global was used in our counterexample to the Martin-Shamovich logarithm problem.
			Our Free Universal Monodromy theorem gives a geometric explanation, there are paths along which the
			function does not continue. 
	\subsection{Sheaf theory}
		In free noncommutative function theory, there have been at least two recent approaches to the development of sheaf theory. Agler, McCarthy and Young, in the process of investigating
		noncommutative symmetric functions in two variables, developed an intricate theory of noncommutative manifolds \cite{AMYMan, aysym}. On the other hand, Klep, Vinnikov and Volcic took an approach
		from germ theory \cite{KVVgerms}. The Free Universal Monodromy theorem says, in the case of free sets, that the theory arising from analytic continuation is essentially trivial. That is, in the global case
		there may be no nontrivial cohomology type theory.

\section{Plurisubharmonic functions}
	\subsection{The local realization}
		The goal of this subsection is to establish a local version of Theorem \ref{introrealization}. We discuss globalization in a later subsection.
		\subsubsection{The middle matrix}
			The first step is use the positivity of the complex hessian of plurisubharmonic free function to show that some matrix assembled from power series coefficients is positive semidefinite and then do some kind of 				Gelfand-Naimark-Segal construction to obtain a realization. Essentially, this adapts the middle matrix type constructions \cite{helmconvex04,GHV,greene} to an infinite case and
			then uses them to generate a representation formula, as opposed to the previously used application,
			which was to Sherlockianly rule out possible forms until all that was left was a small class of polynomials.

			The following technical lemma, in the vein of the Camino-Helton-Skelton-Ye lemma \cite{CHSY}, will be needed for further discourse. Usually these are proven for a finite list of monomials,
			or something like that, and are ubiquitous. We give a generalization for an infinite sequence of linearly independent real free functions. Another advantage is that
the proof of the original Camino-Helton-Skelton-Ye lemma spans several pages, whereas our method is somewhat concise. (However, we  do not obtain explicit size bounds in the finite dimensional case.)
			\begin{lemma}\label{extracheesy}
				Let $h= (h_i)_i$ be a vector valued real free function which is bounded on a neighborhood of $0$ such that $h_i$ are $\ell^2$-linearly independent.
				Let $$C=(c_{i,j})_{i,j}$$
				be a infinite block operator matrix.
				If $$\sum_{i,j} c_{i,j}h_i^*vv^*h_j\geq 0$$
				on a neighborhood of $0$ and for all vectors $v.$
				then $C$ is positive semidefinite.
			\end{lemma}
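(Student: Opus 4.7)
The plan is a Gelfand--Naimark--Segal style density argument. First, I would unpack the hypothesis by testing the operator inequality against an arbitrary vector $u$: the assumption $\sum_{i,j} c_{i,j}\, h_i(X)^\ast v v^\ast h_j(X) \ge 0$ reduces to the scalar inequality
\[
\sum_{i,j} c_{i,j}\,\overline{w_i}\,w_j \ge 0, \qquad w_i = v^\ast h_i(X) u.
\]
Writing $\mathbf w(X,v,u) = (w_i)_i$, which lies in $\ell^2$ because $h$ is bounded on the neighborhood, the hypothesis becomes the statement that $\langle C \mathbf w, \mathbf w\rangle \ge 0$ on the evaluation set $\mathcal E = \{\mathbf w(X,v,u)\}$. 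The goal is then to show that $\mathcal E$ is dense in $\ell^2$, so that positivity on $\mathcal E$ propagates to positivity of $C$ on the whole space.

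Next, I would use the free structure of $h$ to show that $\mathcal E$ is itself a linear subspace. Since $h$ respects direct sums, $h_i(X \oplus X') = h_i(X) \oplus h_i(X')$, and so
\[
\alpha\,\mathbf w(X,v,u) + \beta\,\mathbf w(X',v',u') = \mathbf w\bigl(X \oplus X',\, v \oplus v',\, \alpha u \oplus \beta u'\bigr),
\]
with $X \oplus X'$ again in the free neighborhood of $0$. Thus finite linear combinations of evaluation vectors are themselves evaluation vectors, so $\mathcal E$ is closed under linear combinations.

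Density of $\mathcal E$ in $\ell^2$ is precisely where the $\ell^2$-linear independence hypothesis enters: if $a = (a_i) \in \ell^2$ is orthogonal to every $\mathbf w(X,v,u)$, then $v^\ast\bigl(\sum_i \overline{a_i}\, h_i(X)\bigr) u = 0$ for all $X$ in the neighborhood and all $u, v$, so $\sum_i \overline{a_i}\, h_i \equiv 0$ as a free function, and the $\ell^2$-linear independence forces $a = 0$. Hence $\mathcal E^\perp = \{0\}$ and $\mathcal E$ is dense.

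The hard part will be the final step of transferring positivity from the dense linear subspace $\mathcal E$ to all of $\ell^2$. When $C$ is regarded as a bounded block operator, this is immediate from continuity of the quadratic form, yielding $C \ge 0$. In the purely formal case one instead works level by level: for each $N$, the projection $\pi_N(\mathcal E) \subseteq \mathbb C^N$ is a dense linear subspace and hence equals $\mathbb C^N$, so one can realize any prescribed first $N$ coordinates using an evaluation vector; combining this with a direct-sum normalization to keep the tail controllably small then gives positivity of each finite principal submatrix $(c_{i,j})_{i,j \le N}$, which is the required notion of $C \ge 0$.
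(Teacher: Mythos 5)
Your proposal follows the paper's argument essentially verbatim: form the evaluation set of sequences $(v^\ast h_i(X)u)_i$, observe that direct sums make this set a linear subspace, use $\ell^2$-linear independence to show its orthogonal complement is trivial and hence it is dense, and then transfer positivity of the quadratic form from this dense subspace to $C$ itself. Your last paragraph is slightly more careful than the paper (which simply asserts positivity on the closure and concludes), but the underlying approach is the same.
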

			\begin{proof}
				Consider the vector space,
					$$V_{X,v} = \textrm{Range }v^*h(X).$$
				Note, $$V_{X_1,v_1} + V_{X_2,v_2} = V_{X_1\oplus X_2, v_1\oplus v_2}$$
				Let $V = \bigcup_{X,v} V_{X,v}.$
				Note $V$ is a vector space and $C$ is positive on $\overline{V}.$
				
				Just suppose $(w_i)^{\infty}_{i=1}=w \perp V.$
				Then, $\sum \overline{w_i}v^*h_i(X) = 0$ for all $X, v.$ Therefore $h_i$ are $\ell^2$ linearly dependent.

				Therefore, $\overline{V}$ is the whole Hilbert space and thus $C$ is positive.
			\end{proof}
			We now show that two structured infinite block matrices coming from coefficients,
			$C^+=[c_{\alpha^* z_i^*z_j \beta}]_{z_i\alpha,z_j\beta}, C^-=[c_{\alpha^* z_iz_j^* \beta}]_{z_i^*\alpha,z_j^*\beta}$ are positive semidefinite whenever a function $f(Z) = \sum_{\alpha} c_{\alpha}Z^{\alpha}$ is plurisubharmonic near $0.$
			\begin{lemma}\label{middlematrix}
				Let $f(Z) = \sum_{\alpha} c_{\alpha}Z^{\alpha}$ be a noncommutative power series with operator coefficients which is convergent on some uniform neighborhood of $0.$
				If $f$ is plurisubharmonic on a neighborhood of $0$, then the infinite block matrices 
					$$C^+=[c_{\alpha^* z_i^*z_j \beta}]_{z_i\alpha,z_j\beta}, C^-=[c_{\alpha^* z_iz_j^* \beta}]_{z_i^*\alpha,z_j^*\beta}$$
				are positive semidefinite.
			\end{lemma}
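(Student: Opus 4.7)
The plan is to apply Lemma~\ref{extracheesy} to the infinite block matrices of coefficients. First I expand the Hessian directly from the power series: by the Leibniz rules for $D$ and $D^*$, the Hessian of a monomial $c_\alpha Z^\alpha$ is a sum over ordered pairs of positions in $\alpha$, one holomorphic (getting $H_i$) and one antiholomorphic (getting $H_j^*$). Grouping by which position comes first yields
\begin{align*}
\Delta f(Z)[H] &= \sum_{i,j,\mu,\nu,\tau} c_{\mu z_i \nu z_j^* \tau}\, Z^\mu H_i Z^\nu H_j^* Z^\tau \\
&\quad + \sum_{i,j,\mu,\nu,\tau} c_{\mu z_i^* \nu z_j \tau}\, Z^\mu H_i^* Z^\nu H_j Z^\tau.
\end{align*}

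Next I tag. Evaluate at the augmented point $\tilde Z = Z \oplus 0 \in D$ (legal since $0 \in D_m$ for all $m$ because $D$ is a free set containing a neighborhood of $0$) with $\tilde H_i = \bpm 0 & 0 \\ P_i & 0 \epm$ for arbitrary matrices $P_i$. A direct block computation gives $\tilde H_i^* \tilde Z^\nu = 0$ whenever $\nu$ is nonempty, so in the second sum only $\nu = \emptyset$ survives, while the outer $Z^\mu, Z^\tau$ remain generic; similarly the first sum contributes only to the $(2,2)$ block. Hence the $(1,1)$ block of $\Delta f(\tilde Z)[\tilde H]$ equals
\[
\sum_{i,j,\mu,\tau} c_{\mu z_i^* z_j \tau}\, Z^\mu P_i^* P_j Z^\tau,
\]
which is positive semidefinite because a diagonal block of a positive operator is positive. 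Renaming $\mu = \alpha^*$, $\tau = \beta$, this reads $\sum c_{\alpha^* z_i^* z_j \beta} (P_i Z^\alpha)^* (P_j Z^\beta) \geq 0$.

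To finish, specialize $P_i = v^* Q_i$ (a row vector, the case $m=1$), so $P_i^* P_j = Q_i^* v v^* Q_j$ and
\[
\sum c_{\alpha^* z_i^* z_j \beta}\, (Q_i Z^\alpha)^* v v^* (Q_j Z^\beta) \geq 0.
\]
Treating $\tilde X = (Q_1,\ldots,Q_d,Z_1,\ldots,Z_d)$ as a $2d$-tuple of free variables, set $h_{i,\alpha}(\tilde X) = Q_i Z^\alpha$; these are distinct monomials and hence $\ell^2$-linearly independent. Lemma~\ref{extracheesy} then yields $C^+ \geq 0$. The argument for $C^-$ is the mirror image, using the dual tag $\tilde H_i = \bpm 0 & P_i \\ 0 & 0 \epm$, which collapses the first (Type~A) sum to adjacent-letter coefficients in the $(1,1)$ block. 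The one delicate point is the asymmetric tag design: it must simultaneously force the middle word $Z^\nu$ to be empty (producing exactly the adjacent-letter coefficients) while leaving $Z^\mu, Z^\tau$ free to serve as generic test functions for Lemma~\ref{extracheesy}.
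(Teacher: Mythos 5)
Your proposal is correct and follows essentially the same route as the paper: expand the Hessian as a power series, pad the evaluation point to $Z\oplus 0$ with a strictly lower-triangular (or upper-triangular for $C^-$) direction to kill all terms with a nonempty middle word, extract the $(1{,}1)$ block, and invoke Lemma~\ref{extracheesy}. The only cosmetic difference is that you tag with fresh free letters $Q_i$ and specialize $P_i=v^*Q_i$, whereas the paper sets $P_i=v^*Z_i$ directly and uses the monomials $Z_jZ^\beta$ as the linearly independent family.
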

			\begin{proof}
				The complex Hessian is given by the formula
					$$\Delta f(Z)[H] = \sum c_{\alpha^*z_i^*\gamma z_j\beta}Z^{\alpha^*}H_i^*Z^\gamma H_jZ^{\beta}
					+ c_{\alpha^*z_i\gamma z_j^*\beta}Z^{\alpha^*}H_iZ^\gamma H_j^*Z^{\beta} \geq 0.$$
				Evalute at
					$$Z=\bpm Z & \\ &0 \epm, H=\bpm  & 0  \\ v^*Z & \epm$$
				and take the block $1-1$ entry
				to get that
					$$\sum c_{\alpha^*z_i^*z_j\beta}Z^{\alpha^*}Z_i^*vv^*Z_jZ^{\beta} \geq 0.$$
				As the functions $Z_jZ^{\beta}$ are linearly independent, we see by Lemma \ref{extracheesy} that 
					$$C^+=[c_{\alpha^* z_i^*z_j \beta}]_{z_j\beta,z_i\alpha}\geq 0.$$
				The case for $C^-$ is similar.
			\end{proof}
		\subsubsection{The construction}
			We now proceed to give the construction for the local realization formula for a plurisubharmonic free function. Mostly, the statement of the lemma gives the proof,
			but a few things like boundedness of the elements involved must be nontrivially checked.
			\begin{lemma}\label{representationformula}
				Let $f(Z) = \sum_{\alpha} c_{\alpha}Z^{\alpha}$ be a noncommutative power series with operator coefficients which is convergent on some uniform neighborhood of $0.$
				Let $f$ be plurisubharmonic on a neighborhood of $0.$
				By Lemma \ref{middlematrix}, the infinite block matrices
					$$C^+=[c_{\alpha^* z_i^*z_j \beta}]_{z_i\alpha,z_j\beta}, C^-=[c_{\alpha^* z_iz_j^* \beta}]_{z_i^*\alpha,z_j^*\beta}$$
				are positive semidefinite.
				Let $\hilbert^+$ and $\hilbert^-$ denote the corresponding Hilbert spaces the inner products arising from $C^+$ and $C^-$ respectively. Denote the vector corresponding to the word $\alpha\otimes w$ by $\vex{\alpha\otimes w}.$
				For analytic words $\alpha,$ define the map $T_\alpha: \hilbert^- \rightarrow \hilbert^+$ by $$T_{\alpha} \vex{\beta\otimes w} = \vex{\alpha\beta \otimes w}.$$
				Define $$T(Z) = \sum_{\alpha \textrm{ analytic}, |\alpha|>0} T_\alpha Z^{\alpha}.$$
				Define $$v^+(Z) = \sum_{\alpha \textrm{ analytic}, |\alpha|>0} Q_{\alpha}Z^{\alpha}, v^{-}(Z) = \sum_{\alpha \textrm{ analytic}, |\alpha|>0} Q_{\alpha^*}Z^{\alpha^*}$$
				where $Q_\alpha w = \vex{\alpha \otimes w}.$
				Define $$g(Z) = c_0 + 2\sum_{\alpha \textrm{ analytic}, |\alpha|>0} c_{\alpha}Z^{\alpha}.$$
				The following are true:
				\begin{enumerate}
					\item $T_{\alpha}^*\vex{\beta\otimes w}=\vex{\alpha^*\beta\otimes w},$
					\item $\sup_{\alpha} \|T_{\alpha}\|^{\frac{1}{|\alpha|}}<\infty$,
					\item $T, v^+, v^-,g$ are defined on a uniformly open neighborhood of $0,$ 
					\item $T$ is contractive on a uniformly open neighborhood of $0,$
					\item $$ f(Z) = \textrm{Re }g(Z) + \bbm v^+(Z) \\ v^-(Z)\ebm^* \bbm 1 & -T(Z) \\ -T(Z)^* & 1\ebm^{-1}\bbm v^+(Z) \\ v^-(Z)\ebm.$$
					\item If 
				$$ f = \textrm{Re }\hat{g}(Z) + \bbm \hat{v}^+(Z) \\ \hat{v}^-(Z)\ebm^* \bbm 1 & -\hat{T}(Z) \\ -\hat{T}(Z)^* & 1\ebm^{-1}\bbm \hat{v}^+(Z) \\ \hat{v}^-(Z)\ebm,$$
			where $\hat{g}, \hat{v}^+, \hat{T}$ are analytic, $v^{-}$ coanalytic such that $\hat{T},\hat{v}^+,\hat{v}^-$ vanish at $0$ then $\hat{g} = g+iC$ for some constant $C,$ and up to a change of coordinates,
			$\hat{v}^+= v^+ \oplus 0, \hat{v}^-= v^- \oplus 0, \hat{T}= T \oplus J$ for some function $J.$ That is, the representation is essentially unique up to junk terms.
				\end{enumerate}
			\end{lemma}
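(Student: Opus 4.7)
The six items split naturally into setup (1)--(4), the main identity (5), and uniqueness (6). I will verify them in order, with the coefficient-matching in (5) being the conceptual heart and the growth estimate in (2) being the principal analytic obstacle.

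For (1), I compute directly: for $\vex{\beta\otimes w}\in\hilbert^+$ and $\vex{\gamma\otimes v}\in\hilbert^-$,
$$\langle T_\alpha^*\vex{\beta\otimes w},\vex{\gamma\otimes v}\rangle_{\hilbert^-} = \langle\vex{\beta\otimes w},\vex{\alpha\gamma\otimes v}\rangle_{\hilbert^+}$$
equals the coefficient of $f$ determined by the entry of $C^+$ pairing $\beta$ with $\alpha\gamma$, and the identical coefficient equals $\langle\vex{\alpha^*\beta\otimes w},\vex{\gamma\otimes v}\rangle_{\hilbert^-}$ via the GNS formula for $\hilbert^-$. For (2), convergence of $f$ on a uniform ball of radius $r$ gives geometric control on the diagonal of $C^+$ and $C^-$: $\|\vex{\alpha\otimes w}\|^2 \leq C\|w\|^2 r^{-2|\alpha|}$. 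Since $T_\alpha\vex{\beta\otimes w}=\vex{\alpha\beta\otimes w}$, Cauchy--Schwarz applied in $\hilbert^+$ against such diagonal bounds yields $\|T_\alpha\|=O(\rho^{|\alpha|})$ for some $\rho<\infty$. Analogous estimates for $Q_\alpha$, and the raw coefficient estimates for $g$, then give (3) on some smaller uniformly open neighborhood; (4) follows on a further restricted neighborhood where $\sum_{|\alpha|>0}\|T_\alpha\|\,\|Z\|^{|\alpha|}<1$, so that $T(Z)$ is strictly contractive and the middle block matrix is invertible.

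The heart is (5). I expand
$$\bbm 1 & -T \\ -T^* & 1\ebm^{-1} = \sum_{k\geq 0}\bbm 0 & T \\ T^* & 0\ebm^k,$$
so the quadratic form becomes
$$\sum_{k\geq 0}\left(v^{+*}(TT^*)^kv^+ + v^{-*}(T^*T)^kv^-\right) + 2\,\textrm{Re}\sum_{k\geq 0}v^{+*}(TT^*)^kTv^-.$$
Every noncommutative word admits a unique decomposition into maximal blocks alternating between purely analytic and purely antianalytic letters. Purely analytic and purely antianalytic words are reproduced by $\textrm{Re}\,g$, and the factor of $2$ in the definition of $g$ is exactly what recovers their coefficients in $f$. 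Every truly mixed word has block count and starting-block type determining exactly one summand above in which its coefficient appears; the GNS identity $\langle\vex{\mu\otimes v},\vex{\nu\otimes w}\rangle = \langle c_{\nu^*\mu}v,w\rangle$, valid in both $\hilbert^+$ and $\hilbert^-$, then produces the correct coefficient $c_w$. The main obstacle is the bookkeeping: one must verify that every mixed word is produced by exactly one summand, with the block pattern matching the pattern of $v^{+*},T,T^*,\ldots,v^\pm$ factors, and no over- or under-counting occurs.

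For (6), I invoke minimality of the GNS construction: $\hilbert^+$ and $\hilbert^-$ are by design the closed spans of $\{Q_\alpha w\}$ and $\{Q_{\alpha^*}w\}$ respectively. Given another realization $(\hat g,\hat v^\pm,\hat T)$ of the stated form, matching its power series coefficients with those of $f$ forces the inner products among the cyclic vectors generated by $\hat v^\pm$ and $\hat T$ in its ambient Hilbert spaces to coincide with the GNS inner products of $\hilbert^\pm$. This yields canonical isometric embeddings $\hilbert^\pm\hookrightarrow$ the ambient spaces intertwining $T$ with $\hat T$ on the cyclic subspaces; the orthogonal complements accommodate the ``junk'' summand $J$. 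The remaining ambiguity $\hat g = g + iC$ is the only freedom because only $\textrm{Re}\,\hat g$ appears in the representation formula.
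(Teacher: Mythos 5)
Your plan for items (1) and (3)--(6) coincides with the paper's proof: (1) is the same adjoint computation, (3)--(4) follow from uniform boundedness of the $T_\alpha,Q_\alpha$ and the vanishing of $T$ at $0$, (5) is the same Neumann-series/alternating-block-decomposition bookkeeping, and (6) is the same GNS-minimality argument producing isometric intertwiners with the junk summand on the orthogonal complement.

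Item (2) is where you are under-specified, and a naive reading of your argument does not work. You propose: diagonal bounds $\|\vex{\alpha\otimes w}\|^2\leq C\|w\|^2 r^{-2|\alpha|}$ plus ``Cauchy--Schwarz'' give $\|T_\alpha\|=O(\rho^{|\alpha|})$. But the vectors $\vex{\beta\otimes w}$ are neither orthogonal nor linearly independent, and $C^+$ may be badly degenerate, so knowing $\|T_\alpha\vex{\beta\otimes w}\|=\|\vex{\alpha\beta\otimes w}\|\leq \sqrt{C}\|w\|r^{-|\alpha\beta|}$ for each generator does not bound the ratio $\|T_\alpha\vex{\beta\otimes w}\|/\|\vex{\beta\otimes w}\|$ (the denominator can be arbitrarily small), and a single application of Cauchy--Schwarz gives nothing stronger. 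The paper closes this gap with the spectral radius formula $\|T_\alpha\|^2=\rho(T_\alpha^*T_\alpha)$, together with the observation that iterating keeps you inside the word structure:
$$\langle (T_\alpha^*T_\alpha)^n\vex{\beta\otimes w},\vex{\beta\otimes w}\rangle=\langle c_{\beta^*(\alpha^*\alpha)^n\beta}w,w\rangle\leq M\,r^{-2|\beta|-2n|\alpha|}\|w\|^2,$$
so that taking $n$-th roots and letting $n\to\infty$ kills the $\beta$-dependent constant and yields $\rho(T_\alpha^*T_\alpha)\leq r^{-2|\alpha|}$. (Equivalently, this is the \emph{iterated} Cauchy--Schwarz trick $\langle Ax,x\rangle\leq\langle A^{2^n}x,x\rangle^{2^{-n}}\|x\|^{2-2^{1-n}}$.) If that iteration is what you meant by ``Cauchy--Schwarz against diagonal bounds,'' you must say so explicitly; as written, the step is a genuine gap. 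A secondary omission: in your treatment of $\textrm{Re}\,g$ in (5) you should record that the purely coanalytic coefficients are recovered because $f$ is self-adjoint valued, i.e. $c_{\alpha^*}=c_\alpha^*$.
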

			\begin{proof}
				The theorem is scale invariant, so we without loss of generality assume all the power series coefficients $c_{\alpha}$ are contractive.
				
				(1) $$\langle T_{\alpha}^*\vex{\beta\otimes w},\vex{\gamma\otimes u}\rangle = \langle \vex{\beta\otimes w},T_{\alpha}\vex{\gamma\otimes u}\rangle 
				= \langle c_{\gamma^*\alpha^*\beta}w,u\rangle = \langle \vex{\alpha^*\beta \otimes w},\vex{\gamma\otimes u} \rangle.$$

				(2) Note that $$\|T_{\alpha}\|^2 = \|T_{\alpha}^*T_{\alpha}\|=\sqrt{\rho(T_{\alpha}^*T_{\alpha})}.$$
				Now, note 
					$$\rho(T_{\alpha}^*T_{\alpha}) = \sup_{\beta, w} \langle(T_{\alpha}^*T_{\alpha})^n\vex{\beta\otimes w}, \vex{\beta\otimes w}\rangle^{1/n} =
					\sup_{\beta, w}\langle c_{\beta^*\alpha^*\alpha\beta}w,w\rangle^{1/n}$$
				which is uniformly bounded as the radius of convergence of the series is positive.

				(3) Follows from the fact that the $T_\alpha, Q_\alpha$ are uniformly bounded. Note $\|Q_\alpha w\|^2=\langle c_{\alpha^*\alpha}w,w\rangle \leq \|w\|^2.$

				(4) Follows from the fact that $T$ is uniformly analytic and defined on a neighborhood of $0.$

				(5) This is elementary generatingfunctionology.
				Specifically expanding out the terms in the formal power series for the resolvent
					$$\bbm \hat{v}^+(Z) \\ \hat{v}^-(Z)\ebm^* \bbm 1 & -\hat{T}(Z) \\ -\hat{T}(Z)^* & 1\ebm^{-1}\bbm \hat{v}^+(Z) \\ \hat{v}^-(Z)\ebm$$
				we see that the coefficient of $Z^{\beta}$ is given by $$Q^*_{\alpha_0^*}T_{\alpha_1}\ldots T_{\alpha_{n-1}} Q_{\alpha_n} = c_\beta$$
				where  $\beta = \alpha_0\alpha_1\ldots\alpha_{n-1}\alpha_n$ and the $\alpha_i$ alternate between being analytic and coanalytic
				and $T_{\alpha^*}$ is formally defined to be $T_{\alpha}^*$ whenever $\alpha$ is a nontrivial analytic word.

				(6) Expanding the resolvent and equating term by term says there is an isometry witnessing this decomposition.
				Write $\hat{T}(Z) = \sum \hat{T}_\alpha Z^{\alpha},$ $\hat{v}^+ = \sum \hat{Q}_\alpha Z^\alpha,$
				$\hat{v}^- = \sum \hat{Q}_{\alpha^*} Z^{\alpha^*}.$
				Denote the Hilbert spaces corresponding to the decomposition of the resolvent as $\hat{\hilbert^+}$ and $\hat{\hilbert^-}$
				Define $\hat{\beta}_w = \hat{T}_{\alpha_1}\ldots \hat{T}_{\alpha_{n-1}} \hat{Q}_{\alpha_n}w$
				where $\beta = \alpha_1\ldots\alpha_{n-1}\alpha_n$ and the $\alpha_i$ alternate between being analytic and coanalytic
				and $\hat{T}_{\alpha^*}$ is formally defined to be $\hat{T}_{\alpha}^*$ whenever $\alpha$ is a nontrivial analytic word.
				Now note, when defined,
					$$\langle \hat{\beta}_w,\hat{\eta}_\omega \rangle = \langle \vex{\beta\otimes w},\vex{\eta\otimes \omega} \rangle = 
					\omega^*c_{\eta^*\beta}w.$$
				Therefore, there are isometries $V^+: \hilbert^+ \rightarrow \hat{\hilbert^+}$ and 
				$V^-: \hilbert^- \rightarrow \hat{\hilbert^-}$ such that
					$$\hat{Q}_\alpha = V^+Q_\alpha, \hat{Q}_{\alpha^*} = V^-Q_{\alpha^*}, \hat{T}_\alpha V^+ = V^{-}T_{\alpha},
					\hat{T}_{\alpha}^* V^- = V^{+}T_{\alpha}^*.$$
				Without loss of generality, assume $$\hilbert^+ \subseteq \hat{\hilbert^+}$$ and $$\hilbert^- \subseteq \hat{\hilbert^-}$$
				and $V^+, V^-$ are isometric inclusion maps. We now decompose $\hat{v^+}, \hat{v^-}, \hat{T}$ along the natural decompositions
				$\hat{\hilbert^+} = \hilbert^+ \oplus (\hilbert^+)^{\perp},$ and $\hat{\hilbert^-} = \hilbert^- \oplus (\hilbert^-)^{\perp}.$

				Now, $$\hat{v^+} = V^+v^+ = \bbm 1 \\ 0\ebm v^+ = \bbm v^+ \\ 0 \ebm,$$
				     $$\hat{v^-} = V^-v^- = \bbm 1 \\ 0\ebm v^- = \bbm v^- \\ 0 \ebm.$$
				Write
					$$\hat{T_{\alpha}}= \bbm \hat{T_{\alpha}}^{11} & \hat{T_{\alpha}}^{12} \\ \hat{T_{\alpha}}^{21} & \hat{T_{\alpha}}^{22} \ebm.$$
				Now,
					$$\bbm \hat{T_{\alpha}}^{11} \\ \hat{T_{\alpha}}^{21} \ebm =\hat{T_{\alpha}}V^+= V^-T_{\alpha} = 
					\bbm T_{\alpha} \\ 0 \ebm,$$
				and therefore  $\hat{T_{\alpha}}^{11} = T_{\alpha},$ $\hat{T_{\alpha}}^{21}=0.$
				The relation for $T_{\alpha}^*$ similarly implies that $\hat{T_{\alpha}}^{12}=0$ and therefore $\hat{T} = T \oplus J.$
					
			\end{proof}
	\subsection{Plurisubharmonicity is geometric}
			We have the following corollary of Lemma \ref{representationformula}.
			\begin{corollary}
				Let $f$ be a uniformly real analytic free function on a connected uniformly open set $D$ which is plurisubharmonic 
				on some perhaps smaller open free set $B$. Then, $f$ is plurisubharmonic on $D.$
			\end{corollary}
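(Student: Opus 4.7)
The plan is to globalize the local representation from Lemma \ref{representationformula} via Free Universal Monodromy (Theorem \ref{monodromytheorem}), and then read off plurisubharmonicity from the complex Hessian of the global representation.

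After translating so that a reference point of $B$ lies at $0,$ I would first invoke Lemma \ref{representationformula} to obtain operator-valued analytic $g$, $v^+$, $T$ and coanalytic $v^-$ on a uniformly open neighborhood of $0$ contained in $B$, with $T$ contractive there, satisfying
$$f(Z) = \textrm{Re } g(Z) + \bbm v^+(Z) \\ v^-(Z) \ebm^* \bbm 1 & -T(Z) \\ -T(Z)^* & 1 \ebm^{-1} \bbm v^+(Z) \\ v^-(Z) \ebm.$$

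Next, I would apply Free Universal Monodromy to extend $g$, $v^+$, $T$ analytically and $v^-$ coanalytically from this neighborhood to all of $D.$ Unique analytic continuation then propagates the representation identity wherever the resolvent $R(Z) = \bbm 1 & -T(Z) \\ -T(Z)^* & 1 \ebm^{-1}$ is defined, and differentiating the global formula produces the standard expression
$$\Delta f(Z)[H] = w^+(Z,H)^* R(Z) w^+(Z,H) + w^-(Z,H)^* R(Z) w^-(Z,H),$$
which is manifestly positive semidefinite wherever $R(Z)$ is itself positive, equivalently wherever $T(Z)$ is a strict contraction.

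The main obstacle is verifying that $T$ remains contractive on all of $D.$ I plan to handle this by a connectedness argument applied to $\Omega = \{Z \in D : \|T(Z)\| < 1\},$ which is open and nonempty since it contains the initial neighborhood of $0.$ Closedness in $D$ is the delicate step, where I intend to use the essential uniqueness clause (item (6)) of Lemma \ref{representationformula} at a putative boundary point $Z^* \in D$: any fresh local representation at $Z^*$ must, up to junk, coincide with the globally continued $(g, v^+, v^-, T),$ forcing contractivity to persist across the boundary. Conversely, a genuine loss of contractivity would make $R$ singular while $f - \textrm{Re } g$ remains real analytic, a compatibility the uniqueness clause rules out.
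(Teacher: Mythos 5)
Your approach runs backwards relative to the paper's, and the inversion creates a genuine circularity. In the paper's logical order, the present corollary is a \emph{prerequisite} for globalizing the realization: Lemma~\ref{smallcontinuation}, which shows that $g, v^+, v^-, T$ continue along any path, explicitly assumes $f$ is plurisubharmonic on all of $D$. You propose the reverse---first apply Free Universal Monodromy to extend $g, v^+, T, v^-$ to all of $D$, and only then deduce plurisubharmonicity from the global formula. But Free Universal Monodromy has a hypothesis: the function must continue along \emph{every} path. You give no argument for this, and the paper's argument for it (Lemma~\ref{smallcontinuation}, via Lemmas~\ref{restructure} and~\ref{movecenter}) requires recomputing the canonical realization at a new basepoint $W$, which in turn requires $f$ to already be known plurisubharmonic near $W$. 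So the monodromy step is not available at the point in the argument where you wish to invoke it.

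The closedness step of your connectedness argument has the same circularity in sharper form. At a putative boundary point $Z^* \in \partial\Omega \cap D$ you propose to ``compute a fresh local representation via Lemma~\ref{representationformula}'' and compare it to the continued data via the essential-uniqueness clause. But Lemma~\ref{representationformula} requires $f$ to be plurisubharmonic on a neighborhood of the center, and at $Z^*$ that is precisely what you do not yet know. The final sentence (``a genuine loss of contractivity would make $R$ singular while $f - \textrm{Re }g$ remains real analytic, a compatibility the uniqueness clause rules out'') is not a valid application of item~(6): that clause compares two representations of a function already known to be plurisubharmonic with $T$ contractive; it says nothing about what must happen at a point where $\|T\|$ reaches $1$.

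The paper's proof is more elementary and avoids monodromy entirely. The key observation, which your proposal does not use, is quantitative: in the construction of Lemma~\ref{representationformula}, the radius on which $T$ is a strict contraction---and hence on which $f$ is plurisubharmonic by the Hessian computation---depends only on the radius of convergence of the power series of $f$, \emph{not} on the a priori radius of plurisubharmonicity. This self-improvement, applied at successive centers along a path (using Observation~\ref{resolvent} to recenter at affine points), propagates plurisubharmonicity across all of $D$ by an ordinary open-closed argument on $[0,1]$. If you want to salvage your route, you would need to bootstrap the continuation of $(g, v^+, v^-, T)$ and the contractivity of $T$ \emph{simultaneously} along each path, which in effect reproduces the paper's chaining argument.
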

			\begin{proof}
				Tracing through the proof of Lemma \ref{representationformula}, we see that the domain of convegence of $v^+, v^-, T, g$
				depend only only on the radius of convegence of the original power series. (Not on the radius of plurisubharmonicity, which could \emph{a priori} be smaller.) Moreover, the radius such that $T$ is contractive 
				also depends only on this radius of convegence. That is, it is independent of the assumed domain where it was plurisubharmonic.
				Note that $f$ must be plurisubharmonic wherever $T$ is contractive by taking the complex Hessian. By applying this observation
				along paths, we see that $f$ is plurisubharmonic everywhere. 
			\end{proof}
	\subsection{The global realization}
		\subsubsection{Some algebraic identities and inequalities}
			We now collect some algebraic identities and inequalities that will be needed to affinize realizations.
			\begin{lemma}\label{posreal}
				Suppose $1-A-A^*$ is positive definite, then $1-A$ is invertible and moreover $A(1-A)^{-1}$ is a strict contraction.
			\end{lemma}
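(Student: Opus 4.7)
The plan is to treat the two claims in order: first invertibility of $1-A$, then the norm bound on $B := A(1-A)^{-1}$. Throughout I will use that ``positive definite'' here means bounded below, so there is $\eps > 0$ with $1 - A - A^* \geq \eps$.

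For invertibility, I would expand
$$(1-A)^*(1-A) = 1 - A - A^* + A^*A \geq (1-A-A^*) + A^*A \geq \eps,$$
which shows $1-A$ is bounded below. The same computation applied to $(1-A)(1-A)^*$ shows its adjoint is bounded below, and together these give that $1-A$ is invertible with bounded inverse. Let $C = (1-A)^{-1}$.

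The second claim I would reduce to a single algebraic identity. Observe
$$1 + B \;=\; 1 + A(1-A)^{-1} \;=\; \bigl[(1-A) + A\bigr](1-A)^{-1} \;=\; C,$$
so $B = C - 1$. Consequently
$$1 - B^*B \;=\; 1 - (C^* - 1)(C-1) \;=\; C + C^* - C^*C \;=\; C^*\bigl(C^{-*} + C^{-1} - 1\bigr)C.$$
The inner factor collapses beautifully: $C^{-1} + C^{-*} - 1 = (1-A) + (1-A^*) - 1 = 1 - A - A^*$. Hence
$$1 - B^*B \;=\; C^*(1 - A - A^*)\, C.$$

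Since $C$ is bounded and invertible, $C^*C \geq \|C^{-1}\|^{-2}$, and combining with the hypothesis $1 - A - A^* \geq \eps$ yields $1 - B^*B \geq \eps \|C^{-1}\|^{-2} > 0$, which is exactly strict contractivity of $B$. The whole argument is really driven by the identity $(1-A)^{-*}(1-A-A^*)(1-A)^{-1} = 1 - B^*B$; the only step requiring any care is step one, where one has to confirm both $1-A$ and its adjoint are bounded below before asserting invertibility, but even that follows directly from expanding the hypothesis. So I do not expect a genuine obstacle — the lemma is a clean Cayley-type identity dressed up in operator language.
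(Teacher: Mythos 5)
Your proof is correct and is essentially the same as the paper's. Both arguments establish invertibility of $1-A$ by observing that its real part (equivalently, both $(1-A)^*(1-A)$ and $(1-A)(1-A)^*$) is bounded below by the hypothesis, and both reduce strict contractivity of $B=A(1-A)^{-1}$ to the single factorization $1-B^*B = (1-A)^{-*}(1-A-A^*)(1-A)^{-1}$; your Cayley-style rewriting $B = (1-A)^{-1}-1$ is a pleasant way to organize the algebra but arrives at the identical identity the paper uses.
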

			\begin{proof}
				We will show that the real part of $1-A$ is positive definite, and hence $1-A$ must be invertible.
				Note that $2\textrm{Re }(1-A) = 2-A-A^* \geq 1-A-A^*.$ 
				Note $1-(1-A^*)^{-1}A^*A(1-A)^{-1}$ is positive whenever $(1-A^*)(1-A)-A^*A=1-A-A^*$ is positive, and therefore
				$A(1-A)^{-1}$ is a strict contraction.
			\end{proof}
			The following is an algebraic fact.
			\begin{lemma}\label{restructurefine}
				Suppose $1-A-A^*$ is positive definite.
				The following are equivalent:
				\begin{enumerate}
					\item $D^*(1-A-A^*)^{-1}C$
					\item $\bbms (1-A)^{-1}D \\ 0 \ebms^* \bbms 1 & -A(1-A)^{-1} \\  -A^*(1-A^*)^{-1} &1 \ebms^{-1} \bbms (1-A)^{-1}C \\ 0 \ebms$
					\item $\bbms 0 \\ (1-A^*)^{-1}D \ebms^* \bbms 1 & -A(1-A)^{-1} \\  -A^*(1-A^*)^{-1} &1 \ebms^{-1} \bbms (1-A)^{-1}C \\ 0 \ebms+D^*(1-A)^{-1}C$
					\item $\bbms (1-A)^{-1}D \\ 0 \ebms^* \bbms 1 & -A(1-A)^{-1} \\  -A^*(1-A^*)^{-1} &1 \ebms^{-1} \bbms 0 \\ (1-A^*)^{-1}C \ebms +D^*(1-A^*)^{-1}C $
					\item $\bbms 0 \\ (1-A^*)^{-1}D \ebms^* \bbms 1 & -A(1-A)^{-1} \\  -A^*(1-A^*)^{-1} &1 \ebms^{-1} \bbms 0 \\ (1-A^*)^{-1}C \ebms $
					\item $\bbms A(1-A)^{-1}D \\ A^*(1-A^*)^{-1}D \ebms^*
					\bbms 1 & -A(1-A)^{-1} \\  -A^*(1-A^*)^{-1} &1 \ebms^{-1} \bbms (1-A)^{-1}C \\ 0 \ebms + D^*(1-A)^{-1}C $
					\item $\bbms A(1-A)^{-1}D \\ A^*(1-A^*)^{-1}D \ebms^*
					\bbms 1 & -A(1-A)^{-1} \\  -A^*(1-A^*)^{-1} &1 \ebms^{-1} \bbms 0 \\ (1-A^*)^{-1}C \ebms + D^*(1-A^*)^{-1}C $
					\item $\bbms (1-A)^{-1}D \\ 0 \ebms^*
					\bbms 1 & -A(1-A)^{-1} \\  -A^*(1-A^*)^{-1} &1 \ebms^{-1} \bbms A(1-A)^{-1}C \\ A^*(1-A^*)^{-1}C \ebms + D^*(1-A^*)^{-1}C $
					\item $\bbms 0 \\ (1-A^*)^{-1}D \ebms^*
					\bbms 1 & -A(1-A)^{-1} \\  -A^*(1-A^*)^{-1} &1 \ebms^{-1} \bbms A(1-A)^{-1}C \\ A^*(1-A^*)^{-1}C \ebms + D^*(1-A)^{-1}C $
					\item $\bbms A(1-A)^{-1}D \\ A^*(1-A^*)^{-1}D \ebms^*
					\bbms 1 & -A(1-A)^{-1} \\  -A^*(1-A^*)^{-1} &1 \ebms^{-1} \bbms A(1-A)^{-1}C \\ A^*(1-A^*)^{-1}C \ebms  +D^*[(1-A)^{-1}+(1-A^*)^{-1}-1]C $
				\end{enumerate}
			\end{lemma}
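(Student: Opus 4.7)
The plan is to show that all ten expressions are equal to expression (1), namely $D^*(1-A-A^*)^{-1}C$, by writing everything in terms of a single resolvent. Set $B := A(1-A)^{-1}$. By Lemma \ref{posreal}, $1-A$ is invertible and $B$ is a strict contraction, so the middle matrix
$$M := \bbm 1 & -B \\ -B^* & 1 \ebm$$
is positive definite. The Schur complement formula gives
$$M^{-1} = \bbm (1-BB^*)^{-1} & B(1-B^*B)^{-1} \\ B^*(1-BB^*)^{-1} & (1-B^*B)^{-1} \ebm.$$

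The core algebraic step will be the identity
$$(1-A)(1-BB^*)(1-A^*) = 1-A-A^*, \qquad (1-A^*)(1-B^*B)(1-A) = 1-A-A^*,$$
which follows from the fact that $A$ commutes with $(1-A)^{-1}$, together with a one-line expansion using $B = A(1-A)^{-1}$ and $B^* = A^*(1-A^*)^{-1}$. Writing $R := (1-A-A^*)^{-1}$ and inverting, these identities give $(1-BB^*)^{-1} = (1-A^*)R(1-A)$ and $(1-B^*B)^{-1} = (1-A)R(1-A^*)$, and similarly $B(1-B^*B)^{-1} = (1-A^*)RA$ and $B^*(1-BB^*)^{-1} = A^*R(1-A)$ after using $(1-A)A(1-A)^{-1}=A$ again. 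Substituting gives the clean formula
$$M^{-1} = \bbm (1-A^*)R(1-A) & (1-A^*)RA \\ A^*R(1-A) & (1-A)R(1-A^*) \ebm.$$

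With this formula in hand, each of the ten expressions reduces to $D^*RC$ by direct computation. The diagonal cases (2) and (5) are immediate: the factors $(1-A^*)^{-1}$ and $(1-A)^{-1}$ in the outer vectors cancel the corresponding factors in $M^{-1}$, collapsing to $D^*RC$. The asymmetric cases (3), (4) and the ``full'' cases (6)--(10) require one further identity, the telescoping
$$A^*R + 1 = (A^* + R^{-1})R = (1-A)R, \qquad 1 + RA = R(R^{-1}+A) = R(1-A^*),$$
which absorbs the $+D^*(1-A)^{-1}C$ or $+D^*(1-A^*)^{-1}C$ correction terms into the main quadratic form. The symmetric case (10) uses both identities to absorb the two-sided correction $D^*[(1-A)^{-1}+(1-A^*)^{-1}-1]C$.

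The main obstacle is not conceptual but bookkeeping: ten expressions, each a different slicing of the same operator identity. I would verify (2) and (5) first as warm-ups, then (3), (4) to demonstrate the telescoping trick, and finally note that (6)--(10) introduce the additional left/right factors $A(1-A)^{-1}$ and $A^*(1-A^*)^{-1}$ in the outer vectors, each of which reduces under the formula for $M^{-1}$ by repeated application of $(1-A)A(1-A)^{-1}=A$ and its adjoint. Symmetry between the $(1,1)$ and $(2,2)$ corners of $M^{-1}$ via the substitution $A \leftrightarrow A^*$ cuts the casework roughly in half.
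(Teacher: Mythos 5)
Correct, and built on the same core step as the paper's own proof: both invert $M$ via the Schur-complement block formula, and the identity $(1-A)(1-BB^*)(1-A^*)=1-A-A^*$ you isolate is exactly the simplification the paper performs when it reduces $(1-A(1-A)^{-1}A^*(1-A^*)^{-1})^{-1}$ to $(1-A-A^*)^{-1}$ by "bringing the inverses inside" in its worked case $(1)=(2)$. You go further by writing all four blocks of $M^{-1}$ in terms of $R=(1-A-A^*)^{-1}$ and supplying the telescoping identities $A^*R+1=(1-A)R$ and $1+RA=R(1-A^*)$, which gives a uniform derivation of all ten expressions where the paper only demonstrates $(1)=(2)$ and refers the reader to NCAlgebra or hand computation for the remaining cases.
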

			\begin{proof}
				We can interpret the formulas as follows. We start with $C$ multiplied by some number of powers of $A$ (or $A^*$,) then multiply
				by some nonzero power of $A^*$ and then some nonzero power of $A$ and
				so on until we choose to stop and multiply by some power of $A^*$ (or $A$) and then finally
				$D^*.$ The formulas make sense because of Lemma \ref{posreal}. The interested reader may be interested in formally checking
				the identities by hand or using computer algebra software such as NCAlgebra in Mathematica \cite{ncalgebra}.

				We will concretely prove (1) equals (2) by hand for demonstration purposes.
				Consider
					$$\bbms (1-A)^{-1}D \\ 0 \ebms^* \bbms 1 & A(1-A)^{-1} \\  A^*(1-A^*)^{-1} &1 \ebms^{-1} \bbms (1-A)^{-1}C \\ 0 \ebms.$$
				Note, using the formula for the inverse of a block two by two matrix,
					 $$\bbms 1 & A(1-A)^{-1} \\  A^*(1-A^*)^{-1} &1 \ebms^{-1}=\bbms (1-A(1-A)^{-1}A^*(1-A^*)^{-1})^{-1} & * \\  * &* \ebms^{-1}.$$
				Therefore, the desired quanitity is equal to
					$$((1-A)^{-1}D)^*(1-A(1-A)^{-1}A^*(1-A^*)^{-1})^{-1}(1-A)^{-1}C.$$
				Bringing the inverses inside gives
					$$D^*((1-A)(1-A^*)-AA^*)^{-1}C=D^*(1-A-A^*)^{-1}C.$$
			\end{proof}
		\subsubsection{Pushing the realization around}
			The following is an algebraic fact.
			\begin{lemma}\label{restructure}
				Let $v^+, T$ be analytic free functions and $v^-$ be coanalytic
				on some uniformly open neighborhood of $0$ which vanish at $0,$ and let $v_0$ be a constant operator.
				Let
					$$f(Z) = (v^+(Z) + v^{-}(Z)+v_0)^*(1-T(Z)-T(Z)^*)^{-1}(v^+(Z) + v^{-}(Z)+v_0).$$
				Consider the functions 
				$$\hat{v}^+(Z)=(1-T(Z))^{-1}(v^+(Z)+T(Z)v_0),$$
				$$\hat{v}^-(Z)=(1-T(Z)^*)^{-1}(v^-(Z)+T(Z)^*v_0),$$
				$$\hat{T}(Z)=T(Z)(1-T(Z))^{-1},$$
				$$\hat{g}(z)=2(v^-(Z)+v_0)^*(1-T(Z))^{-1}(v^+(Z)+v_0) - v_0^*v_0.$$
				Each of the above are well defined whenever $1-T(Z)-T(Z)^*$ was positive, and $\hat{T}(Z)$ is contractive there.
				Moreover,
				$$f(Z) = \textrm{Re }\hat{g}(Z) + \bbm \hat{v}^+(Z) \\ \hat{v}^-(Z)\ebm^* \bbm 1 & -\hat{T}(Z) \\ -\hat{T}(Z)^* & 1\ebm^{-1}\bbm \hat{v}^+(Z) \\ \hat{v}^-(Z)\ebm.$$
			\end{lemma}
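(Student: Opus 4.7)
The plan has three stages: well-definedness and contractivity of $\hat{T}$, expansion of the target right-hand side into nine bilinear cross terms, and matching those against Lemma \ref{restructurefine}.

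First, fix $Z$ with $1 - T(Z) - T(Z)^* > 0$. Applying Lemma \ref{posreal} with $A = T(Z)$ gives invertibility of $1 - T(Z)$ and of $1 - T(Z)^*$, so $\hat{v}^\pm(Z)$, $\hat{T}(Z)$, and $\hat{g}(Z)$ are all defined, and $\hat{T}(Z) = T(Z)(1-T(Z))^{-1}$ is strictly contractive. Analyticity of $\hat{v}^+$ and $\hat{T}$ and coanalyticity of $\hat{v}^-$ follow from the fact that $(1-T)^{-1}$ is analytic and $(1-T^*)^{-1}$ is coanalytic.

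Next, using the commutation $(1-T)^{-1}T = T(1-T)^{-1} = \hat{T}$, split
$$\bbm \hat{v}^+(Z) \\ \hat{v}^-(Z) \ebm = w_1 + w_2 + w_3,$$
where $w_1 = \bbm (1-T)^{-1}v^+ \\ 0 \ebm$, $w_2 = \bbm 0 \\ (1-T^*)^{-1}v^- \ebm$, and $w_3 = \bbm T(1-T)^{-1}v_0 \\ T^*(1-T^*)^{-1}v_0 \ebm$. With $M = \bbm 1 & -\hat{T} \\ -\hat{T}^* & 1\ebm^{-1}$, the middle-matrix expression in the target identity expands into $\sum_{i,j=1}^3 w_i^* M w_j$. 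Each $w_i^* M w_j$ is precisely the block bilinear form appearing on one of the lines (2)--(10) of Lemma \ref{restructurefine} with $A = T$, where the pair $(D,C) \in \{v^+, v^-, v_0\}^2$ is read off from the shape of $(w_i, w_j)$ (analytic, coanalytic, or symmetric). By that lemma each $w_i^* M w_j$ equals $D^*(1 - T - T^*)^{-1}C$ minus an explicit correction built from $(1-T)^{-1}$ and $(1-T^*)^{-1}$. Summing the nine main pieces gives $(v^+ + v^- + v_0)^*(1 - T - T^*)^{-1}(v^+ + v^- + v_0) = f(Z)$, so the identity reduces to checking that the nine corrections sum to $-\textrm{Re }\hat{g}(Z)$.

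The corrections group into conjugate pairs, since the correction at position $(i,j)$ is the adjoint of the one at $(j,i)$. Off-diagonal pairs thus combine into terms of the form $-2\,\textrm{Re}[X^*(1-T)^{-1}Y]$ with $X,Y \in \{v^+, v^-, v_0\}$; the $(3,3)$ entry contributes $-2\,\textrm{Re}[v_0^*(1-T)^{-1}v_0] + v_0^* v_0$, while the $(1,1)$ and $(2,2)$ corrections vanish. Collecting yields $-2\,\textrm{Re}[(v^- + v_0)^*(1-T)^{-1}(v^+ + v_0)] + v_0^* v_0$, which by the definition of $\hat{g}$ is exactly $-\textrm{Re }\hat{g}(Z)$. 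The main obstacle is this careful pairing of the nine correction terms; once Lemma \ref{restructurefine} is in hand, it is disciplined but routine bookkeeping with no further conceptual content.
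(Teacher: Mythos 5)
Your proof is correct and is essentially the paper's proof run in reverse: the paper distributes $f(Z)$ into the nine terms $D^*(1-T-T^*)^{-1}C$ with $D,C\in\{v^+,v^-,v_0\}$ and applies relations (2)--(10) of Lemma~\ref{restructurefine} to reassemble the resolvent form, while you start from the target resolvent form, split $[\hat v^+;\hat v^-]$ as $w_1+w_2+w_3$, and apply the same nine relations with the identical $(D,C)$ pairings. One small wording slip: you say each $w_i^*Mw_j$ equals $D^*(1-T-T^*)^{-1}C$ \emph{minus} a correction yet conclude the corrections sum to $-\textrm{Re}\,\hat g$; these two statements are mutually inconsistent (if the corrections are subtracted they must sum to $+\textrm{Re}\,\hat g$), though your final collection step has the signs right, so the argument goes through.
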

			\begin{proof}
				We distribute the product into 9 terms apply Lemma \ref{restructurefine} nine times. Specifically, one applies relation $2$ to $D=v^+, C=v^+,$
				relation $3$ to $D=v^-, C=v^+,$ relation $4$ to $D=v^+, C=v^-,$ relation $5$ to $D=v^-, C=v^-,$
				relation $6$ to $D=v_0, C=v^+,$ relation $7$ to $D=v_0, C=v^-,$
				relation $8$ to $D=v^+, C=v_0,$ relation $9$ to $D=v^-, C=v_0,$
				and finally relation $10$ to $D=v_0, C=v_0$ and collecting terms. That $\hat{T}$ is contractive follows from Lemma \ref{posreal}.
			\end{proof}
			The following lemma is another algebraic fact.
			\begin{lemma}\label{movecenter}
				Let $v^+, T$ be analytic free functions and $v^-$ coanalytic on some uniformly open neighborhood of $0$ which vanish at $0.$
				Consider
					$$f(Z) = (v^+(Z) + v^{-}(Z))^*(1-T(Z)-T(Z)^*)^{-1}(v^+(Z) + v^{-}(Z)).$$
				Suppose $W$ is in the common domain of the components of the realization and $1-T(W)-T(W)^*$ is positive.
				Then,
					$$f(Z+W)= (\hat{v}^+(Z) + \hat{v}^{-}(Z)+\hat{v_0})^*(1-\hat{T}(Z)-\hat{T}(Z)^*)^{-1}(\hat{v}^+(Z) + \hat{v}^{-}(Z)+\hat{v_0})$$
				where 
					$u = \sqrt{1-T(W)-T(W)^*},
					\hat{T}(Z)= u^{-1}(T(Z)-T(W))u^{-1},
					\hat{v}^+(Z) = u^{-1}(v^{+}(Z)-v^+(W)),
					\hat{v}^-(Z) = u^{-1}(v^{-}(Z)-v^-(W)),
					\hat{v_0} = u^{-1}(v^+(W)+v^-(W)),$
				which are all defined on the neighborhood where the original quantities were defined, translated by $W.$
			\end{lemma}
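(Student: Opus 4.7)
The plan is a direct algebraic manipulation: conjugate the middle resolvent of $f$ by the positive invertible operator $u = \sqrt{1 - T(W) - T(W)^*}$, chosen precisely to absorb the constants produced when $Z$ is replaced by $Z + W$. First I would check that $u$ is well-defined and invertible: since by hypothesis $W$ lies in the common domain of the components and $1 - T(W) - T(W)^*$ is positive definite, it has a unique positive invertible square root.

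The key computation is the middle factor. Interpreting the hatted objects as functions of the shifted variable --- so that $\hat{T}(Z)$ is built from $T(Z+W) - T(W)$, and similarly $\hat{v}^\pm(Z)$ from $v^\pm(Z+W) - v^\pm(W)$, which makes the hatted pieces vanish at $Z=0$ --- one expands
$$1 - \hat{T}(Z) - \hat{T}(Z)^* = u^{-1}\bigl[u^2 - (T(Z+W) - T(W)) - (T(Z+W)^* - T(W)^*)\bigr]u^{-1} = u^{-1}\bigl(1 - T(Z+W) - T(Z+W)^*\bigr)u^{-1},$$
so that $(1 - \hat{T}(Z) - \hat{T}(Z)^*)^{-1} = u\,(1 - T(Z+W) - T(Z+W)^*)^{-1}\,u$. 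The $v^\pm(W)$ pieces then telescope into the constant $\hat{v_0}$:
$$\hat{v}^+(Z) + \hat{v}^-(Z) + \hat{v_0} = u^{-1}\bigl(v^+(Z+W) + v^-(Z+W)\bigr).$$

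Next I would substitute these two identities into the claimed right-hand side; the outer $u^{-1}$ factors cancel the flanking $u$'s from the middle resolvent, leaving
$$\bigl(v^+(Z+W) + v^-(Z+W)\bigr)^*\bigl(1 - T(Z+W) - T(Z+W)^*\bigr)^{-1}\bigl(v^+(Z+W) + v^-(Z+W)\bigr) = f(Z+W),$$
as required. Definedness on the translated neighborhood is automatic, since every ingredient is a uniform rational expression in the shifted $T, v^\pm$ and in $u^{\pm 1}$, each of which is defined wherever the original realization is at $Z+W$. Strict contractivity of $\hat{T}(Z)$ on that neighborhood follows immediately from Lemma \ref{posreal} applied to the positivity of $1 - \hat{T}(Z) - \hat{T}(Z)^*$. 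There is no substantive obstacle in the argument: the entire content of the lemma is that $u$ is exactly the correct conjugator to absorb the constants generated by moving the center, and the only delicate point is notational --- keeping the shift $Z \mapsto Z + W$ consistently applied to $T$ and $v^\pm$ throughout the definitions of the hatted objects.
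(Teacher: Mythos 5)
Your proof is correct and is exactly the ``evaluate and simplify'' computation the paper leaves to the reader. The two identities you isolate --- $1-\hat{T}(Z)-\hat{T}(Z)^* = u^{-1}\bigl(1-T(Z+W)-T(Z+W)^*\bigr)u^{-1}$ and $\hat{v}^+(Z)+\hat{v}^-(Z)+\hat{v_0} = u^{-1}\bigl(v^+(Z+W)+v^-(Z+W)\bigr)$ --- are the whole content, and your use of the self-adjointness of $u$ so that the flanking $u^{\pm 1}$ factors cancel is the right and only subtlety. You also correctly point out the notational gloss in the statement: the displayed formulas $\hat{T}(Z)=u^{-1}(T(Z)-T(W))u^{-1}$ etc.\ must be read with the shifted argument $T(Z+W)$, both so that the hatted pieces vanish at $Z=0$ (as Lemma~\ref{representationformula} requires for the subsequent re-centering in Lemma~\ref{restructure}) and so that the left-hand side comes out as $f(Z+W)$ rather than $f(Z)$; otherwise the identity as literally written reduces to $f(Z)=f(Z)$. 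One small inaccuracy at the end: Lemma~\ref{posreal} gives strict contractivity of $A(1-A)^{-1}$, not of $A$ itself, so it does not assert that $\hat{T}(Z)$ is a contraction --- but that claim is not part of Lemma~\ref{movecenter} (contractivity is produced later, in Lemma~\ref{restructure}, after the Cayley-type substitution $\hat{T}=T(1-T)^{-1}$), so omitting that sentence leaves your argument intact and complete.
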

			\begin{proof}
				The proof is left to the reader. (That is, one essentially just evaluates the formula and simplifies.)
			\end{proof}
			Now we see that the realization for $f$ must analytically continue due to canonicity of the construction.
			\begin{lemma}\label{smallcontinuation}
				Let $f$ be a uniformly real analytic plurisubharmonic free function on a connected domain $D$ in $d$ variables containing $0$.
				Let $v^{-},v^{+}, T, g$ be as in Lemma \ref{representationformula}.
				Let $W$ be in their common domain, such that the components of the realization at $W$ are also defined at $0.$
				Let $v_W^{-},v_W^{+}, T_W, g_W$ be as in Lemma \ref{representationformula} for $f(Z+W).$
				Then, $v^+, v^-, T, g$ analytically continue to the common domain of $v_W^{-},v_W^{+}, T_W, g_W.$
			\end{lemma}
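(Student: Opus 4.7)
The plan is to exploit the essential uniqueness in item (6) of Lemma \ref{representationformula} together with the invertibility of the algebraic operations in Lemmas \ref{movecenter} and \ref{restructure}. Their composition is a recipe that transforms a canonical realization of $f$ centered at $0$ into a canonical realization of $f(Z+W)$ centered at $0$; the recipe is invertible wherever both $T$ and $T_W$ are strict contractions, and running it in reverse on $v_W^+, v_W^-, T_W, g_W$ will manufacture the desired analytic continuation on the translate by $W$ of the common domain of $v_W^\pm, T_W, g_W$.

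In the forward direction, on the intersection of the domain of $v^\pm, T, g$ with $W + \dom(v_W)$, I first apply Lemma \ref{movecenter} with shift $W$ to produce $\hat{v}^+, \hat{v}^-, \hat{T}$ and a constant $\hat{v}_0$ realizing $f(Z+W)$ with a constant term present. Applying Lemma \ref{restructure} then absorbs $\hat{v}_0$ and yields a canonical realization of $f(Z+W)$: analytic $\hat{\hat{v}}^+, \hat{\hat{T}}, \hat{\hat{g}}$ and coanalytic $\hat{\hat{v}}^-$, with $\hat{\hat{v}}^\pm$ and $\hat{\hat{T}}$ vanishing at $0$ and $\hat{\hat{T}}$ a strict contraction there. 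Since the power-series construction of Lemma \ref{representationformula} applied to $f(Z+W)$ produces $v_W^\pm, T_W, g_W$, item (6) of that lemma supplies isometric inclusions identifying $\hat{\hat{v}}^\pm, \hat{\hat{T}}, \hat{\hat{g}}$ with $v_W^\pm$, $T_W \oplus J$, $g_W + iC$ for some junk function $J$ and constant $C$.

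In the reverse direction, starting from $v_W^\pm, T_W, g_W$ on the full common domain $\Omega_W$, I invert each lemma algebraically. The relation $\hat{T} = T(1-T)^{-1}$ from Lemma \ref{restructure} inverts to $T = \hat{T}(1+\hat{T})^{-1}$, which is well defined whenever $\hat{T}$ is a strict contraction since $-1$ then lies outside the spectrum; a direct calculation gives $(1+T_W^*)(1-T-T^*)(1+T_W) = 1 - T_W^*T_W$, so the positivity hypothesis $1 - T - T^* > 0$ needed to re-enter Lemma \ref{movecenter} is automatic on $\Omega_W$. The remaining components $v^\pm, v_0, g$ are recovered by analogous closed-form manipulations of $\hat{v}^\pm, \hat{g}$. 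Lemma \ref{movecenter} itself is inverted by un-translating the variable and undoing the conjugation by $u = \sqrt{1-T(W)-T(W)^*}$ together with the constant shift by $T(W), v^\pm(W)$, all of which are known from evaluating the original realization at $W$. The output is operator-valued analytic and coanalytic free functions $\tilde{v}^\pm, \tilde{T}, \tilde{g}$ defined throughout $W + \Omega_W$; since forward and reverse operations are mutually inverse in this regime, on the original overlap these agree with $v^\pm, T, g$ modulo the isometric inclusion and junk summand $J$ identified in the forward step. The junk contributes nothing to the realization formula for $f$ and may be absorbed as a harmless direct summand, yielding a genuine analytic continuation of $v^\pm, T, g$.

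The main obstacle will be the bookkeeping of Hilbert-space summands: the junk $J$ in $T_W$ may be nontrivial, and I must verify that reversing the restructure and the center-change does not force $J$ to contaminate $v^\pm$ or $g$ on the overlap. This is a formal consequence of the isometries supplied by item (6) of Lemma \ref{representationformula} intertwining the forward transformations summand-by-summand; inverting them intertwines the reverses on the $v$-summand alone, leaving the $J$-summand undisturbed, and two analytic free functions agreeing on a uniformly open set must coincide on any connected neighborhood containing it.
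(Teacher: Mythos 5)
Your proposal is correct and follows the same route as the paper's proof: apply Lemma \ref{movecenter} and then Lemma \ref{restructure} to the realization shifted to $W$, use uniqueness (6) of Lemma \ref{representationformula} to identify the result with $v_W^\pm, T_W \oplus J, g_W$ up to junk, and then recover $v^\pm, T, g$ on the larger domain by inverting the algebraic transformations. The paper's proof is terser and leaves the invertibility and the harmlessness of the junk summand implicit; your write-up makes both explicit (including the useful identity $(1+T_W^*)(1-T-T^*)(1+T_W)=1-T_W^*T_W$), which is a welcome elaboration rather than a different argument.
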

			\begin{proof}
				Compute the realization at $W.$ Using Lemma \ref{movecenter} and then Lemma \ref{restructure} we obtain a realization at $0$ as in Lemma \ref{representationformula} which is defined 
				on the domain of the realization at $W$,
				which by (6) of Lemma \ref{representationformula} has a domain which is smaller than that of the realization computed directly at $0$ and so therefore the realization analytically continues
 				to the appropriate domain. 
			\end{proof}
			Finally, canonicity plays well with computation of affine realizations.
			\begin{observation} \label{resolvent}
				We note that given $D \subseteq \MU{R_1}$ and $f:D\rightarrow \MU{R_2}$ free plurisubharmonic, we can induce 
				a plurisubharmonic function $f^{[m]}:D^{[m]}\rightarrow \MU{R_2\otimes M_m(\mathbb{C})}$ on $D^{[m]}=\bigcup D_{nm} \subseteq \MU{R_1\otimes M_m(\mathbb{C})}$
				by defining $f^{[m]}(Z)=f(Z).$
				In the construction in Lemma \ref{representationformula}, one can show that
				$T^{[m]}(Z) = T(Z), (v^+)^{[m]}(Z)=v^+(Z), (v^-)^{[m]}(Z)=v^-(Z), g^{[m]}(Z)=g(Z).$ This follows directly from the Hankel matrix type construction,
				although computing the exact form of the corresponding $C^+,$ $C^-$ does not appear to be particularly useful.
			\end{observation}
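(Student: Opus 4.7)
My plan is to trace through the construction of Lemma \ref{representationformula} for $f^{[m]}$ and observe that each step produces the same object, canonically identified, as the corresponding step for $f$. The crux is that the entire construction depends only on the operator-valued power series coefficients $c_\alpha$ of $f$ at $0$, and these coefficients are intrinsic to $f$ rather than to the particular matrix universe in which it is evaluated.

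First, I would identify the power series expansion of $f^{[m]}$ at $0$. Writing a generic point $Z \in \MU{R_1 \otimes M_m(\mathbb{C})}$ with respect to a basis of $R_1 \otimes M_m(\mathbb{C})$ of the form $r_i \otimes E_{jk}$, and using that $f^{[m]}(Z) = f(Z)$, the coefficients of the resulting expansion for $f^{[m]}$ are precisely the coefficients $c_\alpha$ of $f$, reindexed by the enlarged variable set. In particular, no genuinely new coefficients are introduced: the monomials involving the matrix-unit degrees of freedom of $M_m(\mathbb{C})$ have values completely determined by the original $c_\alpha$ through the obvious tensor bookkeeping.

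Next, I would verify that the Hankel-type matrices $C^{+,[m]}$ and $C^{-,[m]}$ of Lemma \ref{middlematrix} are assembled from the same underlying entries as $C^+$ and $C^-$. Consequently, the GNS-style Hilbert spaces $\mathcal{H}^+, \mathcal{H}^-$, the shift operators $T_\alpha$, and the generating operators $Q_\alpha$ produced by the construction for $f^{[m]}$ agree canonically with those for $f$. The assembled power series $T^{[m]}, (v^+)^{[m]}, (v^-)^{[m]}, g^{[m]}$ then become the same formal series as $T, v^+, v^-, g$, and evaluate to the same operators on $D^{[m]}$.

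An alternative, slicker route would be to note directly that $T, v^+, v^-, g$ already take values in the appropriate tensor spaces and satisfy the realization identity for $f^{[m]}$ by virtue of satisfying it for $f$; then the essential-uniqueness clause (6) of Lemma \ref{representationformula} forces them to coincide with $T^{[m]}, (v^+)^{[m]}, (v^-)^{[m]}, g^{[m]}$ up to junk. The main obstacle here is purely notational bookkeeping between power series in the variables of $R_1$ and those in $R_1 \otimes M_m(\mathbb{C})$; one must check that the extra matrix-unit degrees of freedom do not contribute genuinely new generators to $\mathcal{H}^\pm$, so that the realization produced directly for $f^{[m]}$ is genuinely minimal and the uniqueness clause can rule out any additional $\oplus J$ factor.
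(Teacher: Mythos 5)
Your approach matches the paper, which offers no proof beyond the same assertion that the claim ``follows directly from the Hankel matrix type construction.'' Both of your routes are sound in spirit. One clarification on the first: the GNS Hilbert spaces $\hilbert^{\pm,[m]}$ built from $f^{[m]}$ are not literally identical to $\hilbert^{\pm}$; they are canonically isomorphic to $\mathbb{C}^m\otimes\hilbert^{\pm}$ (the matrix-unit indices give the extra $\mathbb{C}^m$ factor), and the assembled power series for $T^{[m]}$, $(v^{\pm})^{[m]}$, $g^{[m]}$ live over the enlarged alphabet of $dm^2$ letters rather than being the same formal series. The asserted identities $T^{[m]}(Z)=T(Z)$, etc., should therefore be read as equalities of the \emph{evaluated functions} on $D^{[m]}$ after applying that natural unitary identification; once this is said precisely, the trace-through argument does go through exactly as you describe. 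On your alternative route, one small correction of emphasis: the constructed realization for $f^{[m]}$ is minimal automatically by clause (6) of Lemma \ref{representationformula}, so the thing that actually needs checking is that the \emph{ported-over} restriction $(T|_{D^{[m]}}, v^{\pm}|_{D^{[m]}}, g|_{D^{[m]}})$, viewed as a realization of $f^{[m]}$ on $\mathbb{C}^m\otimes\hilbert^{\pm}$, has no excess summand, i.e.\ that $\dim\hilbert^{\pm,[m]}=m\cdot\dim\hilbert^{\pm}$ so the $\oplus J$ term is forced to vanish --- which is the same bookkeeping your first route carries out directly.
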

		\subsubsection{Globalization}
			Now, as a direct consequence of Lemma \ref{smallcontinuation} combined with Observation \ref{resolvent} and Free Universal Monodromy, we have the main result.
			\begin{theorem}\label{fullrepresentationformula}
				Let $R_1$ be a finite dimensional vector space.
				Let $R_2$ be a subspace of operators.
				Let $D \subseteq \MU{R_1}$ be a connected uniformly open set  containing $0.$
				Let $f:D\rightarrow \MU{R_2}$ be a uniformly real analytic free function on  which is plurisubharmonic on a neighborhood of $0$.
				Then, the function $f$ is plurisubharmonic on $D,$ and
				there are operator-valued free analytic functions $g, v^+, T$ and coanalytic $v^-$ which are defined on the whole domain $D$, such that the following are true.
		 		\begin{enumerate}
					\item $T$ is contractive on $D,$
					\item $T, v^+, v^-$ vanish at $0.$
					\item $$ f(Z) = \textrm{Re }g(Z) + \bbm v^+(Z) \\ v^-(Z)\ebm^* \bbm 1 & -T(Z) \\ -T(Z)^* & 1\ebm^{-1}\bbm v^+(Z) \\ v^-(Z)\ebm,$$
					\item If 
				$$ f = \textrm{Re }\hat{g}(Z) + \bbm \hat{v}^+(Z) \\ \hat{v}^-(Z)\ebm^* \bbm 1 & -\hat{T}(Z) \\ -\hat{T}(Z)^* & 1\ebm^{-1}\bbm \hat{v}^+(Z) \\ \hat{v}^-(Z)\ebm,$$
			where $\hat{g}, \hat{v}^+, \hat{T}$ are analytic, $v^{-}$ coanalytic such that $\hat{T},\hat{v}^+,\hat{v}^-$ vanish at $0$ then $\hat{g} = g+iC$ for some constant $C,$ and up to a change of coordinates,
			$\hat{v}^+= v^+ \oplus 0, \hat{v}^-= v^- \oplus 0, \hat{T}= T \oplus J$ for some function $J.$ That is, the representation is essentially unique up to junk terms.
				\end{enumerate}
			\end{theorem}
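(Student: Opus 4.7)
The plan is to promote the local realization from Lemma \ref{representationformula} to a global one by analytic continuation, using Lemma \ref{smallcontinuation} to take small steps and Free Universal Monodromy (Theorem \ref{monodromytheorem}) to ensure path-independence. At the starting point $0$, Lemma \ref{representationformula} produces operator-valued analytic $g, v^+, T$ and coanalytic $v^-$ on some uniformly open neighborhood $U_0 \ni 0$, with $T$ contractive there, $T, v^+, v^-$ vanishing at $0$, and the realization formula holding on $U_0$. The ``plurisubharmonicity is geometric'' corollary of Lemma \ref{representationformula} already yields that $f$ is plurisubharmonic on all of $D$, so Lemma \ref{representationformula} is applicable at every point of $D$.

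Fix any point $W \in D$ and a continuous path $\gamma$ in $D$ from $0$ to $W$. I would cover $\gamma$ by finitely many small free neighborhoods, at each of whose centers $W_i$ Lemma \ref{representationformula} builds a local realization; Lemma \ref{smallcontinuation} then asserts that the components $g, v^+, v^-, T$ centered at $W_i$ are analytic continuations of those centered at $W_{i-1}$, and $T$ remains contractive on each piece because each local realization is canonical in the sense produced by Lemma \ref{representationformula}. Chaining finitely many such continuations produces an analytic continuation of $g, v^+, v^-, T$ along $\gamma$ to a neighborhood of $W$. Free Universal Monodromy applied to each component then promotes this to a single-valued global extension on all of $D$, and the realization identity for $f$ persists by the identity principle for real analytic free functions.

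For the affine case, where $W$ lies in $D_m$ with $m$ possibly larger than the level of $0$, I would pass to the induced plurisubharmonic function $f^{[m]}$ on $D^{[m]}$ via Observation \ref{resolvent}. The observation guarantees that the realization components constructed for $f^{[m]}$ restrict correctly to those for $f$ wherever both are defined, so the same continuation argument inside $D^{[m]}$ produces the values of $g, v^+, v^-, T$ at $W$, and the global realization identity for $f$ reads off by projecting back via the diagonal inclusion used earlier in the harmonic conjugate argument. Uniqueness up to junk terms is a direct transport of Lemma \ref{representationformula}(6), since any other realization $\hat g, \hat v^\pm, \hat T$ must agree with the canonical one locally at $0$ up to junk and this agreement analytically continues. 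The main obstacle I anticipate is the careful tracking in the affine case: ensuring that the level jump via $D^{[m]}$ is compatible both with Lemma \ref{smallcontinuation} (which one must verify in the enlarged coefficient-space setting) and with the monodromy application. Observation \ref{resolvent} is tailored precisely for this purpose, so I expect the verification to be bookkeeping rather than substantively new.
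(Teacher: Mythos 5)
Your proposal is correct and follows essentially the same route as the paper, which globalizes the local construction of Lemma \ref{representationformula} by stepping along paths via Lemma \ref{smallcontinuation}, handling affine base points with Observation \ref{resolvent}, and invoking Free Universal Monodromy to make the continuation single-valued. The paper's own proof is a single sentence asserting exactly this combination; you have merely filled in the bookkeeping (path covering, the identity principle, transport of uniqueness) that the paper leaves implicit.
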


\section{Questions}
	We close with a series of questions, motivated by our investigation of Free Universal Monodromy and plurisubharmonicity, with vary levels of well-definedness as to what constitutes an ``answer."
	\begin{enumerate}
		\item Does Free Universal Monodromy extend to algebraic contexts as was the case for free inverse function theorems in \cite{PascoeMathZ}? 
		\item When does a nonsingular free noncommutative function possess a logarithm or a square root?
		\item What is the theory of partial differential equations in free noncommutative function theory? Free Universal Monodromy suggests that 
		the existence-uniqueness theory should be robust. 
		\item For both monotonicity and convexity in \cite{royalroad} and plurisubharmonicity here,
		the positivity of certain Hankel matrices of power series coefficients was locally translation invariant. (See Lemma \ref{middlematrix}.)
		Is there a general theorem stating which Hankel-type matrices
		of power series coefficients have this kind of geometric positivity?
		\item It is clear that free plurisubharmonic functions need not be uniformly real analytic.
		One can construct such examples using hereditary sums of polynomial identities.
		However, are all free plurisubharmonic functions real analytic? (We conjecture the answer is no.)
		\item The representation derived in Lemma \ref{representationformula} formally makes sense for real analytic functions
		which are not uniformly real analytic, however no convergence
		can be guaranteed using our methods. Does the realization formula hold anyway?
		\item Is there a direct proof of the fact that a free plurisubharmonic function is the composition of a convex function with an analytic function using the fact
		it is plurisubharmonic at each level and applying classical representation formulas?
	\end{enumerate}

\bibliography{references}
\bibliographystyle{plain}

\printindex

\end{document}